\newcommand{\ol}{\overline}
\newtheorem{theorem}{Theorem}[section]
\newtheorem{lemma}[theorem]{Lemma}
\newtheorem{proposition}[theorem]{Proposition}
\newtheorem{example}[theorem]{Example}
\newtheorem{definition}[theorem]{Definition}
\newtheorem{remark}[theorem]{Remark}
\numberwithin{equation}{section}
\def\ln{\hbox {\rm ln\,}}
\def\P{{\bf P}}
\def\R{{\bf R}}
\def\R{{\bf R}}
\def\Q{{\bf Q}}
\def\E{{\bf E}}
\def\ep{{\epsilon}}
\def\cE{{\cal E}}
\def\cF{{\cal F}}
\def\cC{{\cal C}}
\def\rp{\right)}
\def\lp{\left(}
\def\disp{\displaystyle{}}
\def\al{\alpha}
\def\ep{\varepsilon}
\def\ga{\gamma}
\def\de{\delta}
\def\te{\theta}
\def\be{\beta}
\def\la{\lambda}
\begin{document}

\author{Paavo Salminen\\{\small Åbo Akademi University}
\\{\small Mathematical Department}
\\{\small FIN-20500 Åbo, Finland} \\{\small email: phsalmin@abo.fi}
\and
Pierre Vallois
\\{\small Universit\'e Henri Poincar\'e}
\\{\small D\'epartement de Math\'ematique}
\\{\small F-54506 Vandoeuvre les Nancy, France}
\\{\small email: vallois@iecn.u-nancy.fr}
%{\small email: }
}

\title{On subexponentiality of the L\'evy measure of the diffusion inverse local time; with applications to penalizations}
\date{}
\maketitle

\begin{abstract} For a recurrent linear diffusion on $\R_+$ 
we study the asymptotics of the distribution of its local time at 0 as
the time parameter tends to infinity. Under the assumption 
that the L\'evy measure of the inverse local time is subexponential 
this distribution behaves asymtotically as a multiple of the L\'evy
measure. Using spectral representations we find the exact value of the
multiple. For this we also need a result on the asymptotic
behavior of the convolution of a
subexponential distribution and an arbitrary distribution on $\R_+.$
The exact knowledge of the asymptotic behavior of the distribution of the
local time allows us to analyze the process
derived via a penalization procedure with the local time. This result
generalizes the penalizations obtained in Roynette, Vallois and
Yor \cite{rvyV} for Bessel processes. 
%We discuss also an
%example, that is, Brownian motion on $[0,1]$ reflected at 0 and 1, 
%in which the subexponentiality does not hold.  
\\
\\
\\
\\ \\%\bigskip\noindent
{\rm Keywords: Brownian motion, Bessel process, Hitting time,
  Tauberian theorem, excursions}  
\\ \\ %\bigskip\noindent
{\rm AMS Classification: 60J60, 60J65, 60J30} 
\end{abstract}

\eject
\section{Introduction}
\label{sec0}
{\bf 1.} Let $X$ be a linear regular recurrent 
diffusion taking values in $\R_+$ with 0 an instantaneously reflecting
boundary and $+\infty$ a natural boundary. Let $\P_x$ and $\E_x$ 
denote, respectively, the probability measure and the
expectation associated with $X$ when started from $x\geq 0.$ We assume that $X$ is 
defined in the canonical space $C$ of continuous functions $\omega:\R_+\mapsto \R_+.$ 
Let 
$$
{\cal C}_t:=\sigma\{\omega(s): s\leq t\}
$$  
denote the smallest $\sigma$-algebra making the co-ordinate mappings up to time $t$ measurable 
and take ${\cal C}$ to be the smallest $\sigma$-algebra including all $\sigma$-algebras ${\cal C}_t,\ t\geq 0.$

We let $m$ and $S$ denote the speed measure and the scale function of
$X,$ respectively. We normalize $S$ by $S(0)=0$ and remark that
$S(+\infty)=+\infty$ since we assume $X$ to be recurrent. 
It is also assumed that $m$ does not have atoms. Recall that 
$X$ has a jointly continuous transition density 
$p(t;x,y)$ with respect to  $m,$ i.e., 
$$
\P_x(X_t\in A)=\int_A p(t;x,y)\, m(dy),
$$
where $A$ is a Borel subset of $\R_+.$ Moreover, $p$ is symmetric in $x$ and $y,$ that is,
$p(t;x,y)=p(t;y,x).$ The Green or the resolvent kernel of $X$ is defined for $\lambda>0$ via
\begin{equation}
\label{a0}
 R_\lambda(x,y):=\int_0^\infty {\rm e}^{-\lambda t}\,p(t;x,y)\,dt ,
\end{equation}

Let $\{L^{(y)}_t\,:\, t\geq 0\}$ denote the local time of $X$ at $y$
normalized via 
\begin{equation}
\label{e000}
L^{(y)}_t=\lim_{\delta\downarrow 0}\frac 1{m((y,y+\delta))} \int_0^t 
{\bf 1}_{[y,y+\delta)}(X_s)\, ds.
\end{equation}
For $y=0$ we write simply $L_t,$ and define for $\ell\geq 0$ 
\begin{equation}
\label{e00}
\tau_\ell:=\inf\{s: L_s>\ell\},
\end{equation}
i.e., $\tau:=\{\tau_\ell:\ell\geq 0\}$ is the right continuous inverse of $\{L_t\}.$ As is well known
$\tau$ 
is an increasing L\'evy process, in other words, a subordinator
and its L\'evy exponent is given by
\begin{eqnarray}
\label{e1}
&&\hskip-1cm
\nonumber
\E_0\left(\exp(-\lambda \tau_\ell)\right)=\exp\left(-\ell/R_\lambda(0,0)\right)
\\
&&\hskip1.8cm=
\exp(-\ell\int_0^\infty \nu(dv)(1-{\rm e}^{-\lambda v})),
\end{eqnarray}
where $\nu$ is the L\'evy measure of $\tau.$ 
The assumption that the speed measure does not have an atom at 0 implies that
$\tau$ does not have a drift. 
\vskip.4cm
\noindent
{\bf 2.} We are interested in the asymptotic behavior of the distribution of
$L_t$ as $t$ tends to infinity. The basic assumption under which this
study is done is the subexponentiality of the L\'evy measure of $\tau$
(see Section \ref{sec3}). The subexponentiality assumption is equivalent with the relation  
(cf. Proposition \ref{prop31}) 
$$
\P(\tau_\ell\geq t) \,\mathop{\sim}_{t\to +\infty}\,
\ell\,\nu((t,+\infty))\quad  \forall\ \ell>0.
$$

Here and throughout the paper the notation 
%{\bf Notation.} {\sl For two real valued functions $f$ and $g$ we write
$$
f(x)
\,\mathop{\sim}_{x\to a}\, 
g(x),
$$
where $f$ and $g$ are real valued functions and $a$ is allowed to take 
also ``values'' $+\infty$ or $-\infty,$ means that 
$$
\lim_{x\to a}\frac{f(x)}{g(x)}=1.
$$

Since $\tau$ is the inverse of $L,$ it also holds (see Proposition \ref{prop31}) 
$$
 \P_0(L_t\leq \ell) \,\mathop{\sim}_{t\to +\infty}\, \ell\,\nu((t,+\infty)).
$$
To extend this for an arbitrary starting state $x>0,$ we first show
that  (see Proposition \ref{prop32}) 
$$
 \P_x(H_0>t) \,\mathop{\sim}_{t\to +\infty}\, S(x)\,\nu((t,+\infty)),
$$
where 
$
H_0:=\inf\{t:\, X_t=y\},%y\geq 0, 
$
and then  (see Proposition \ref{prop33}) 
\begin{equation}
\label{f04}
 \P_x(L_t\leq \ell) \,\mathop{\sim}_{t\to +\infty}\, (S(x)+\ell)\,\nu((t,+\infty)).
\end{equation}

Our motivation for relation (\ref{f04}) arose from the desire to generalize the 
penalization result obtained for Bessel processes in Roynette, Vallois and
Yor \cite{rvyV} (see also \cite{rvyCR} and \cite{rvyI}). From our point of view, since many of the
penalization results are derived for Brownian motion and Bessel
processes, it is important to increase
understanding of the assumptions needed to guarantee the validity of such
results for more general diffusions. In particular, we prove that 
(see Theorem  \ref{thm62} and Example \ref{ex61})
\begin{equation}
\label{f05}
\lim_{t\to\infty}\frac{\E_0(h(L_t)\,|\,\cC_u)}{\E_0(h(L_t))}=S(X_u)h(L_u)+1-
  H(L_u)=:M^h_u \qquad \text{a.s.},
\end{equation}
where $h$ is a probability density function on $\R_+$  (with some nice
properties) and $H$ is the corresponding distribution function. 
\vskip.4cm
\noindent
{\bf 3.} The paper is organised as follows. In the next section basic
properties on subexponentiality are presented and a new result 
(Lemma \ref{prop0}) on the limiting behavior of the convolution of an
subexponential and a more general distribution is derived. In Section
3 we study the spectral representations of the hitting time
distributions and the L\'evy measure. In Section 4 results on
subexponentiality and the spectral representations are combined to
yield relation (\ref{f04}). Hereby we also need a weak form of a
Tauberian theorem given as Lemma \ref{lemma31} in Appendix. 
The application in penalizations is discussed in Section 5. To make
the paper more readable we state and prove first the general theorem on
penalizations. After this the penalization with local time is treated 
and (\ref{f05}) is proved. The paper is concluded by characterizing
the law of the canonical process under the penalized measure induced
by the martingale $M^h.$ Using
absolute continuity and the compensation formula for excursions 
we are able to shorten the proof when compared
with the one in \cite{rvyV}.

\section{Subexponentiality}
\label{sec10}

In this section we present some basic results on subexponential
probability distributions. Later, in Section \ref{sec3}, it is assumed
that the probability distribution induced by the tail of the L\'evy
measure of $\tau$ is subexponential. This assumption allows us to
deduce the crucial limiting behavior of the first hitting time
distribution (see Proposition \ref{prop32}). 

\begin{definition}
The probability distribution function $F$ on $(0,+\infty)$  such that 
\begin{equation}
\label{ee040}
F(0+)=0,\quad  F(x)<1\quad\forall x>0,\quad \lim_{x\to\infty}F(x)=1
\end{equation}
is called subexponential if 
\begin{equation}
\label{ee04}
\lim_{x\to +\infty}\ol{F*F}(x)\,/\,\overline F(x)=2
\end{equation}
%$$
%\lim_{x\to +\infty}\frac{1-F*F(x)}{1-F(x)}=2,
%$$
where $*$ denotes the convolution and $\overline F(x):=1-F(x)$ the complementary distribution function.
\end{definition}

For the following two lemmas and their proofs we refer Chistyakov
\cite{chistyakov64} and 
Embrechts et al. \cite{embgolver79}. %Teugels \cite{teugels75}.  

\begin{lemma}
\label{slovar}
If $F$ is a probability distribution function satisfying (\ref{ee040})
and  
$$
\ol F(x)\,\mathop{\sim}_{x\to \infty}\,x^{-\alpha}\, H(x)
$$ 
with $\alpha\geq 0$ and $H$ a slowly varying function  then $F$ is subexponential. 
\end{lemma}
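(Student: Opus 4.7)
The plan is to follow the classical argument of Chistyakov. Let $X_1, X_2$ be i.i.d.\ with distribution $F$; by conditioning $\P(X_1+X_2>x)$ on $X_1$ and using $F(0+)=0$, I would start from the basic identity
\[
\ol{F*F}(x)\;=\;\ol F(x)+\int_0^x \ol F(x-y)\,dF(y).
\]
Consequently (\ref{ee04}) is equivalent to
\[
I(x)\,:=\,\int_0^x \frac{\ol F(x-y)}{\ol F(x)}\,dF(y)\;\longrightarrow\;1 \qquad(x\to+\infty).
\]

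For the lower bound I would apply Fatou's lemma. The hypothesis makes $\ol F$ regularly varying of index $-\alpha$, hence in particular long-tailed, so $\ol F(x-y)/\ol F(x)\to 1$ for each fixed $y\geq 0$; thus $\liminf_{x\to\infty} I(x)\geq \int_0^\infty 1\,dF(y)=1$.

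For the upper bound I would fix $A>0$ and write $I(x)=I_1(x)+I_2(x)$ with $I_1=\int_0^A$ and $I_2=\int_A^x$. The uniform convergence theorem for regularly varying functions gives $I_1(x)\to F(A)$ (the convergence $\ol F(x-y)/\ol F(x)\to 1$ is uniform on $y\in[0,A]$). For $I_2$ I would appeal to Potter's inequality: for any $\delta>0$ there exists $x_0$ such that $\ol F(x-y)/\ol F(x)\leq (1+\delta)(x/(x-y))^{\alpha+\delta}$ whenever $x,\,x-y\geq x_0$. Splitting $I_2=\int_A^{x-x_0}+\int_{x-x_0}^x$, the tail-end piece is bounded by $\ol F(x-x_0)/\ol F(x)-1\to 0$ by regular variation, while the middle piece is controlled by $\ol F(A)$ through a careful combination of the Potter bound with the regular-variation tail estimate for $dF$. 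Together these give $\limsup_{x\to\infty} I(x)\leq F(A)+\ol F(A)=1$, which with the lower bound forces $I(x)\to 1$ and hence (\ref{ee04}).

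The hard part will be the middle estimate $\int_A^{x-x_0} \ol F(x-y)/\ol F(x)\,dF(y)\leq \ol F(A)+o(1)$: there the integrand is bounded away from $1$ and under Potter can be as large as $(x/x_0)^{\alpha+\delta}$, while the total $dF$-mass on the interval is only of order $\ol F(A)$, so a straight product bound fails. One must genuinely integrate Potter's bound against $dF$ using that both $\ol F$ and the tail measure of $F$ are regularly varying, and it is precisely this balance between integrand growth and tail-measure decay that constitutes the technical core of Chistyakov's proof in \cite{chistyakov64} (see also \cite{embgolver79}), which the authors cite rather than reproduce.
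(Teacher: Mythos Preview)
The paper does not prove this lemma at all: immediately before stating it, the authors write ``For the following two lemmas and their proofs we refer Chistyakov \cite{chistyakov64} and Embrechts et al.\ \cite{embgolver79},'' and no argument is given. So there is no paper-proof to compare against; your proposal already goes further than the paper by sketching the classical decomposition $\ol{F*F}(x)=\ol F(x)+\int_0^x \ol F(x-y)\,dF(y)$ and the Fatou/Potter strategy.

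Your sketch is honest and essentially accurate as an outline of Chistyakov's argument, and you correctly flag that the crux is the bound on the middle piece $\int_A^{x-x_0}\ol F(x-y)/\ol F(x)\,dF(y)$, where the integrand can be large while the $dF$-mass is only $\ol F(A)$. You are right that a crude product bound fails there and that one must exploit the joint regular variation of $\ol F$ and the tail measure; this is exactly the step that the cited references carry out and that the paper declines to reproduce. In short: your proposal is consistent with --- and more detailed than --- what the paper itself offers, and the gap you identify is real but is precisely the content of the references both you and the authors invoke.
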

\begin{lemma}
\label{uni}
If $F$ is subexponential then 
\begin{description}
\item{(i)}\ uniformly on compact $y$-sets 
\begin{equation}
\label{ee05}
\lim_{x\to\infty} {\ol F(x+y)}/{\ol F(x)}=1,
\end{equation}
\item{(ii)}\ for all $\ep>0,$\, 
\begin{equation}
\label{ee051}
\lim_ {x\to+\infty}{\rm e}^{\ep\,x}\ol F(x)= +\infty
\end{equation}
\end{description}
\end{lemma}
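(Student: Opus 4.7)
The plan is to follow Chistyakov's classical argument. Starting from the decomposition
\begin{equation*}
\overline{F*F}(x) = \overline{F}(x) + \int_0^x \overline{F}(x-s)\,dF(s),
\end{equation*}
the subexponentiality hypothesis (\ref{ee04}) rewrites, after division by $\overline{F}(x)$, as
\begin{equation*}
J(x) := \int_0^x \frac{\overline{F}(x-s)}{\overline{F}(x)}\,dF(s) \,\mathop{\longrightarrow}_{x\to +\infty}\, 1.
\end{equation*}
Since $\overline{F}$ is non-increasing the integrand is at least $1$; this single identity is the only input I would need beyond the definition.

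For (i), fix $y>0$ and set $\ell := \limsup_{x\to+\infty}\overline{F}(x-y)/\overline{F}(x)\in[1,+\infty]$. Along a subsequence $x_n\to+\infty$ with $\overline{F}(x_n-y)/\overline{F}(x_n)\to\ell$, the monotonicity of $\overline{F}$ gives $\overline{F}(x_n-s)\geq\overline{F}(x_n-y)$ for $s\in[y,x_n]$, so for every $M<\ell$ and $n$ large enough,
\begin{equation*}
\int_y^{x_n}\frac{\overline{F}(x_n-s)}{\overline{F}(x_n)}\,dF(s)\geq M\bigl(F(x_n)-F(y)\bigr),
\end{equation*}
while $\int_0^y \overline{F}(x_n-s)/\overline{F}(x_n)\,dF(s)\geq F(y)$ trivially. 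Adding and letting $n\to\infty$,
\begin{equation*}
1\geq F(y)+M\,\overline{F}(y)=1+(M-1)\,\overline{F}(y),
\end{equation*}
so $M\leq 1$; taking $M\uparrow\ell$ yields $\ell=1$. Hence $\overline{F}(x-y)/\overline{F}(x)\to 1$ for every $y>0$, and the target relation (\ref{ee05}) follows by taking reciprocals and relabeling (the case $y\leq 0$ being trivial or symmetric). Uniformity on a compact set $[c,d]$ is then immediate from the monotonicity of $y\mapsto\overline{F}(x+y)/\overline{F}(x)$: the ratio is sandwiched between its two endpoint values, each of which converges to $1$.

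For (ii), apply (i) with $y=1$ to get $\log\overline{F}(x+1)-\log\overline{F}(x)\to 0$. Ces\`aro summation at integer points yields $\log\overline{F}(n)/n\to 0$, and monotonicity of $\overline{F}$ extends this to $\log\overline{F}(x)/x\to 0$ for real $x$; hence $\varepsilon x+\log\overline{F}(x)\to+\infty$ for every $\varepsilon>0$, which is (\ref{ee051}). The principal obstacle in the proof is the pointwise claim in (i): the convergence $J(x)\to 1$ only controls the shift ratio integrated against $dF$, which in isolation delivers mere $F$-almost-everywhere convergence. The subsequence argument above upgrades this to a genuine pointwise statement, and it relies essentially on the monotonicity of $\overline{F}$ to bound the integrand throughout $[y,x_n]$ below by its single value at $s=y$.
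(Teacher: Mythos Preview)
Your proof is correct and follows the classical Chistyakov argument. The paper itself does not prove this lemma at all; it merely states it and refers the reader to Chistyakov \cite{chistyakov64} and Embrechts, Goldie and Veraverbeke \cite{embgolver79} for the proof, so your write-up in fact supplies what the paper omits.
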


The proof of the next lemma uses some ideas from Teugels \cite{teugels75} p. 1006. 

\begin{lemma}
\label{prop0} Let $F$ and $G$ be two probability distributions on $\R_+.$
Assume that
\begin{description}
\item{(1)}\hskip3mm $F$ is subexponential,  
\item{(2)}\hskip3mm $\lim_{x\to\infty} \ol G(x)/\ol F(x)=c>0.$
\end{description}
Then
\begin{equation}
\label{ee1}
\lim_{x\to\infty} \ol{F*G}(x)/\left(\ol G(x)+\ol F(x)\right)=1.
\end{equation}
\end{lemma}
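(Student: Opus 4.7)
My plan is to start from
$$
\ol{F*G}(x)=\ol F(x)+\int_0^x\ol G(x-y)\,dF(y),
$$
obtained by conditioning in $\ol{F*G}(x)=\mathbf P(X+Y>x)$ on the $F$-variable, and to show that the remaining integral $I(x)$ is asymptotic to $c\,\ol F(x)\sim\ol G(x)$; combined with the leading $\ol F(x)$ this yields $\ol{F*G}(x)\sim\ol F(x)+\ol G(x)$, which is (\ref{ee1}). I prefer this representation to the symmetric one using $dG$, since its integrand $\ol G(x-y)$ can be compared directly to $c\,\ol F(x-y)$ via hypothesis (2).

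Fix $\ep>0$ and, using hypothesis (2), choose $T=T(\ep)$ such that $|\ol G(u)/\ol F(u)-c|<\ep$ for all $u\ge T$. I split $I(x)=\int_0^T+\int_T^{x-T}+\int_{x-T}^x$ and estimate each piece. On $[0,T]$ the uniform convergence $\ol F(x-y)/\ol F(x)\to 1$ of Lemma \ref{uni}(i) combined with $\ol G(x-y)/\ol F(x-y)\to c$ gives
$$
\int_0^T\ol G(x-y)\,dF(y)\,\sim\,c\,F(T)\,\ol F(x).
$$
On $[x-T,x]$ the crude bound $\int_{x-T}^x\ol G(x-y)\,dF(y)\le F(x)-F(x-T)=\ol F(x-T)-\ol F(x)$ is $o(\ol F(x))$, again by Lemma \ref{uni}(i). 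On the middle piece, the choice of $T$ gives $|\ol G(x-y)-c\,\ol F(x-y)|\le\ep\,\ol F(x-y)$ for every $y\in[T,x-T]$, so
$$
\int_T^{x-T}\ol G(x-y)\,dF(y)=(c+O(\ep))\int_T^{x-T}\ol F(x-y)\,dF(y),
$$
where the $O(\ep)$ is uniform in $x$. The subexponentiality of $F$, via $\int_0^x\ol F(x-y)\,dF(y)=\ol{F*F}(x)-\ol F(x)\sim\ol F(x)$, together with the same two boundary estimates applied with $G$ replaced by $F$, then shows $\int_T^{x-T}\ol F(x-y)\,dF(y)/\ol F(x)\to\ol F(T)$.

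Adding the three pieces places both $\liminf$ and $\limsup$ of $I(x)/\ol F(x)$ within $\ep\,\ol F(T)\le\ep$ of $c\,[F(T)+\ol F(T)]=c$; letting $\ep\downarrow 0$ gives $I(x)\sim c\,\ol F(x)$, which completes the argument. The main obstacle is the middle range $[T,x-T]$: there $y$ is not confined to a compact set, so Lemma \ref{uni}(i) cannot be applied directly to the integrand, and one must instead exploit the global subexponential identity $\ol{F*F}\sim 2\ol F$ and peel off the two boundary contributions. This is what forces the middle piece to acquire the factor $\ol F(T)$ which, combined with the factor $F(T)$ produced on $[0,T]$, reassembles the leading constant $c$.
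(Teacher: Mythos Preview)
Your proof is correct and follows essentially the same strategy as the paper: you both start from $\ol{F*G}(x)=\ol F(x)+\int_0^x\ol G(x-y)\,dF(y)$, control the contribution near $y=x$ by Lemma~\ref{uni}(i), and reduce the main contribution to the subexponential identity $\int_0^x\ol F(x-y)\,dF(y)=\ol{F*F}(x)-\ol F(x)\sim\ol F(x)$ via hypothesis~(2).

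The only tactical difference is that the paper uses a two-piece split $[0,x-\delta]\cup[x-\delta,x]$ and normalizes by $\ol G(x)$ rather than $\ol F(x)$: on $[0,x-\delta]$ the lower bound $I_1(x)\ge F(x-\delta)$ follows immediately from the monotonicity $\ol G(x-y)\ge\ol G(x)$, while the upper bound uses the comparison $\ol G\le c(1+\ep)\ol F$ and a direct estimate $\int_0^{x-\delta}\ol F(x-y)\,dF(y)\le\ol{F*F}(x)-\ol F(x)$. Your three-piece split $[0,T]\cup[T,x-T]\cup[x-T,x]$ is more symmetric, treating upper and lower bounds identically and producing the neat recombination $cF(T)+c\,\ol F(T)=c$; the paper's version trades this symmetry for a shorter lower-bound argument.
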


\begin{proof}
Let $\ep\in(0,1).$ By assumption {\sl (2)} there exists $\de=\de(\ep)$ 
such that for $x>\de$
\begin{equation}
\label{ee2} 
c\,(1-\ep)\ol F(x)\leq \ol G(x)\leq c\,(1+\ep)\ol F(x).
\end{equation}
 Observe that 
\begin{eqnarray*}
&&
\hskip-1cm
\ol{F*G}(x)=1-{F*G}(x)=1-F(x)+F(x)-\int_0^x G(x-y)\,dF(y)
\\
&&
\hskip3.6cm
=\ol F(x)+\int_0^x \ol G(x-y)\,dF(y).
\end{eqnarray*}
We assume now, throughout the proof, that $x>\de$ and write
\begin{equation}
\label{ee3} 
 \frac{\ol{F*G}(x)}{\ol G(x)+\ol F(x)}= 
\frac{\ol{G}(x)}{\ol G(x)+\ol F(x)}\left(I_1(x)+I_2(x)\right)
+\frac{\ol{F}(x)}{\ol G(x)+\ol F(x)},
\end{equation}
where
$$
I_1(x):=\int_0^{x-\delta} \frac{\ol G(x-y)}{\ol G(x)}\,dF(y)
$$
and 
$$
I_2(x):=\int_{x-\delta}^x \frac{\ol G(x-y)}{\ol G(x)}\,dF(y).
$$
Obviously, by assumption  {\sl (2)}, the claim (\ref{ee1}) follows if we show that 
\begin{equation}
\label{ee4} 
\lim_{x\to\infty} I_1(x)=1
\end{equation}
and
\begin{equation}
\label{ee5} 
\lim_{x\to\infty} I_2(x)=0.
\end{equation}
{\sl Proof of (\ref{ee5}).}\ Since $\ol G(x-y)\leq 1$ we have
\begin{eqnarray*}
&&
\hskip-1.65cm
 I_2(x)\leq \int_{x-\delta}^x \frac{dF(y)}{\ol
   G(x)}=\frac{F(x)-F(x-\de)}{\ol G(x)}
\\
&&\hskip2cm
=\frac{\ol F(x-\de)-\ol F(x)}{\ol G(x)}
\\
&&\hskip2cm
=\frac{\ol F(x)}{\ol G(x)}\left(\frac{\ol F(x-\de)}{\ol F(x)}-1\right).
\end{eqnarray*}
Using now (\ref{ee05}) and assumption {\sl (2)} yields (\ref{ee5}).

\noindent
{\sl Proof of (\ref{ee4}).}\  Since  
$$
\ol G(x-y)\geq \ol G(x)
$$  
we have 
$$
I_1(x)=\int_0^{x-\delta} \frac{\ol G(x-y)}{\ol G(x)}\,dF(y)\geq F(x-\de).
$$
Consequently, 
\begin{equation}
\label{ee60} 
\liminf_{x\to +\infty}I_1(x)\geq 1.
\end{equation}
To derive an upper estimate, notice first that
\begin{equation}
\label{ee6} 
I_1(x)\leq \frac{1+\ep}{1-\ep}\int_0^{x-\delta} \frac{\ol F(x-y)}{\ol F(x)}\,dF(y),
\end{equation}
because, from (\ref{ee2}),
$$
x>\de\quad\Rightarrow\quad \ol G(x)\geq c(1-\ep)\ol F(x)
$$
and
$$
y\leq x-\de\quad\Rightarrow\quad x-y\geq \de\quad\Rightarrow\quad  \ol G(x-y)\leq c(1+\ep)\ol F(x-y).
$$
Next we develop the integral term in (\ref{ee6}) as follows
\begin{eqnarray*}
&&
%\hskip-1.65cm
\int_0^{x-\delta} \ol F(x-y)\,dF(y)
\\
&&
\hskip2cm
=\int_0^{x-\delta}\left( 1- F(x-y)\right)\,dF(y)
\\
&&
\hskip2cm
=F(x-\de)-\int_0^{x-\delta} F(x-y)\,dF(y)
\\
&&
\hskip2cm
=F(x)-\int_0^{x-\delta} F(x-y)\,dF(y)+F(x-\de)-F(x)
\\
&&
\hskip2cm
=F(x)-\int_0^{x-\delta} F(x-y)\,dF(y)-\int_{x-\delta}^x\,dF(y)
\\
&&
\hskip2cm
=F(x)-\int_0^{x-\delta}F(x-y)\,dF(y)%-\int_{x-\delta}^xF(x-y)\,dF(y)
\\
&&
\hskip3.5cm
-\int_{x-\delta}^xF(x-y)\,dF(y)-\int^x_{x-\delta}\ol F(x-y)\,dF(y)
\\
&&
\hskip2cm
\leq 
F(x)-\int_0^{x}F(x-y)\,dF(y).
\end{eqnarray*}
Hence, 
$$
\int_0^{x-\delta} \ol F(x-y)\,dF(y)\leq F(x)-F*F(x)
=\ol{F*F}(x)-\ol F(x).
$$
Consequently, from (\ref{ee6}),
$$
I_1(x)\leq \left(\frac{1+\ep}{1-\ep}\right)\,\left(\frac{\ol{F*F}(x)-\ol F(x)}{\ol F(x)}\right),
$$
and using (\ref{ee04}) and letting $\ep\to 0$ we obtain
$$
\limsup_ {x\to +\infty}I_1(x)\leq 1
$$
which together with (\ref{ee60}) proves (\ref{ee4}) completing the
proof of Lemma \ref{prop0}   
\end{proof}
%h

\section{Spectral representations}% of hitting time distributions and L\'evy measures of inverse local time}
\label{sec2}

Spectral representations play a crucial role in our study of 
asympotic properties of the hitting time
distributions. In this section we recall basic properties of these
representations and derive some useful estimates. For references on
spectral theory of strings, we list \cite{kackrein74}, \cite{kasahara75},
\cite{dymmckean76}, \cite{kent80}, \cite{kuchler80},  \cite{knight81}
, \cite{kotaniwatanabe81}, \cite{kent82}, and
\cite{kuchlersalminen89}.  

Besides the diffusion $X$ itself, it is important to study $X$ when killed at the first hitting time of 0, denoted 
$\widehat X=\{\widehat X_t :t\geq 0\}$, i.e., the diffusion with the sample paths 
\begin{equation}
\label{kill}
\widehat X_t:=
\begin{cases}
X_t, & t<H_0,\\
\partial,& t\geq H_0,
\end{cases}
\end{equation}
where 
$
H_0:=\inf\{t:\, X_t=y\},%y\geq 0, 
$
and 
$\partial$  is a point isolated from $\R_+$ (a "cemetary" point).
Then $\{\widehat X_t:t\geq 0\}$ is a diffusion    
with the same scale and speed as $X.$ 
Let $\hat p$ denote the transition density of $\widehat X$ with respect to $m:$ 
\begin{equation}
\label{kill2}
\P_x(\widehat X_t\in dy)=\P_x(X_t\in dy; t<H_0)=\hat p(t;x,y)\,m(dy).
\end{equation}
Recall that the density of the $\P_x$-distribution of $H_0$ exists and
is given by   
\begin{equation}
\label{f00}
f_{x0}(t):=\P_x(H_0\in dt)/dt=\lim_{y\downarrow 0}\frac{\hat p(t;x,y)}{S(y)}.
\end{equation}
Moreover, the L\'evy measure $\nu$ of the inverse local time $\tau,$
see (\ref{e000}) and (\ref{e00}), is absolutely continuous with respect to the Lebesgue measure, 
and the density of $\nu$ satisfies %$\nu$ - is given by %can be computed as follows
\begin{eqnarray}
\label{v00}
&&\hskip-.5cm
%\nonumber
\dot\nu(v):=\nu(dv)/dv=\lim_{x\downarrow 0}\frac{f_{x0}(v)}{S(x)}
%=-\frac{\partial^2}{\partial S(x)\partial S(x)}\hat p(v;x,x)\vert_{x=0}
%\\
%&&\hskip1.1cm
%=:p^\uparrow(v;0,0).
\end{eqnarray}

We define now the basic eigenfunctions $A(x;\gamma)$ and $C(x;\gamma)$
associated with $X$ and $\widehat X,$ respectively, 
via the integral equations
(recall that $S$ is continuous and $m$ has no atoms) 
$$
A(x;\gamma)=1-\gamma\int_0^xdS(y)\,\int_0^ym(dz)\, A(z;\gamma),
$$
\begin{equation}
\label{e201}
C(x;\gamma)=S(x)-\gamma\int_0^xdS(y)\,\int_0^ym(dz)\, C(z;\gamma),
\end{equation}
and the initial values
\begin{equation}
\label{e2015}
A(0;\gamma)=1,\quad A'(0;\gamma):=\lim_{x\downarrow 0}\frac{A(x;\gamma)-1}{S(x)}=0,
\end{equation}
\begin{equation}
\label{e2016}
C(0;\gamma)=0,\quad C\,'(0;\gamma):=\lim_{x\downarrow 0}\frac{C(x;\gamma)}{S(x)}=1.
\end{equation}
Let $\{A_n\}$%\,:\, n=0,1,2,\dots\}$
and  $\{C_n\}$ be two families of functions defined by %\,:\, n=0,1,2,\dots\}$ by
\begin{equation}
\label{e210}
A_0(x)=1,\qquad A_{n+1}(x)=\int_0^xdS(y)\,\int_0^ym(dz)\, A_n(z)
\end{equation}
and
\begin{equation}
\label{e21}
C_0(x)=S(x),\qquad C_{n+1}(x)=\int_0^xdS(y)\,\int_0^ym(dz)\, C_n(z),
\end{equation}
respectively. Then the functions $A(x;\gamma)$ and  $C(x;\gamma)$ are explicitly given by
\begin{equation}
\label{e2150}
A(x;\gamma)=\sum_{n=0}^\infty (-\gamma)^n\,A_n(x).
\end{equation}
and
\begin{equation}
\label{e215}
C(x;\gamma)=\sum_{n=0}^\infty (-\gamma)^n\,C_n(x),
\end{equation}
respectively (see  Kac and Krein \cite{kackrein74} p. 29). In the next
lemma we give an estimate which shows that the series for $C$
converges rapidly for all values on $\gamma$ and $x\geq 0$. A similar
estimate for $A$ can be 
found in Dym and McKean \cite{dymmckean76} p. 162.

\begin{lemma}
\label{lemma1}
The functions $x\mapsto C_n(x),\, x\geq 0,\, n=0,1,2,\dots,$ are positive, increasing 
and satisfy
\begin{equation}
\label{e22}
 C_n(x)\leq \frac 1{n!}\, S(x) \left(\int_0^x M(y)\,dS(y)\right)^n
\end{equation}
where $M(z)=m(0,z).$ 
\end{lemma}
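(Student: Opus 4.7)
The plan is to proceed by induction on $n$, establishing positivity, monotonicity, and the stated bound simultaneously. The base case $n=0$ is immediate: $C_0(x)=S(x)$ is positive and increasing on $\R_+$ by the normalization $S(0)=0$ and the regularity of $X$, and the right-hand side of (\ref{e22}) collapses to $\frac{1}{0!}S(x)\cdot 1 = S(x)$, so equality holds.

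For the inductive step, assume $C_n$ is positive and increasing and satisfies the bound. From the recursion
$$C_{n+1}(x)=\int_0^x dS(y)\int_0^y m(dz)\,C_n(z),$$
both positivity and monotonicity of $C_{n+1}$ are evident since the integrand is nonnegative. To derive the estimate, I would substitute the inductive hypothesis into the inner integral and use the monotonicity of $S$ and of $y\mapsto\int_0^y M(u)\,dS(u)$: for $z\le y$ one has $S(z)\le S(y)$ and $\int_0^z M(u)\,dS(u)\le\int_0^y M(u)\,dS(u)$, so
$$\int_0^y m(dz)\,C_n(z)\;\le\;\frac{S(y)}{n!}\,\Bigl(\int_0^y M(u)\,dS(u)\Bigr)^n\int_0^y m(dz)\;=\;\frac{S(y)\,M(y)}{n!}\Bigl(\int_0^y M(u)\,dS(u)\Bigr)^n.$$
Using once more $S(y)\le S(x)$ to pull $S(x)$ outside the outer integration leaves
$$C_{n+1}(x)\;\le\;\frac{S(x)}{n!}\int_0^x M(y)\Bigl(\int_0^y M(u)\,dS(u)\Bigr)^n dS(y).$$

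Setting $F(y):=\int_0^y M(u)\,dS(u)$, the function $F$ is continuous (since $m$ has no atoms and $S$ is continuous) and nondecreasing, so the Lebesgue--Stieltjes change of variable $dF(y)=M(y)\,dS(y)$ gives
$$\int_0^x M(y)\,F(y)^n\,dS(y)=\int_0^x F(y)^n\,dF(y)=\frac{F(x)^{n+1}}{n+1},$$
which yields $C_{n+1}(x)\le \frac{1}{(n+1)!}S(x)F(x)^{n+1}$ and closes the induction.

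I do not anticipate a genuine obstacle: the argument is essentially a monotone rearrangement of the iterated integral followed by recognition of the power-integral $\int F^n\,dF=F^{n+1}/(n+1)$. The only minor subtlety is justifying the change-of-variable formula for the Stieltjes integral against the continuous nondecreasing $F$, which is standard. Note that one could equally well bound $\int_0^y m(dz)\,C_n(z)$ via integration by parts against $M$, but the monotonicity-plus-substitution route is the shortest.
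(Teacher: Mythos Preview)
Your argument is correct and follows essentially the same route as the paper: induction on $n$, with the inductive step handled by substituting the hypothesis, using monotonicity of $S$ and of $y\mapsto\int_0^y M(u)\,dS(u)$ to extract the factors $S(x)$ and $F(y)^n M(y)$, and then integrating $\int_0^x F(y)^n\,dF(y)=F(x)^{n+1}/(n+1)$. The only cosmetic difference is that the paper bounds $S(u)\le S(x)$ in one step whereas you pass through $S(z)\le S(y)\le S(x)$, which is immaterial.
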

\begin{proof} The fact that $C_n$ are positive and increasing is immediate from (\ref{e21}). 
Clearly (\ref{e22}) holds for $n=0.$ Hence, consider
\begin{eqnarray*}
&&
C_{n+1}(x)=\int_0^xdS(y)\,\int_0^ym(du)\, C_n(u)\\
&&\hskip1.55cm
\leq 
\int_0^xdS(y)\,\int_0^ym(du)\, \frac 1{n!}\, S(u) \left(\int_0^u M(z)\,dS(z)\right)^n
\\
&&\hskip1.55cm
\leq 
 \frac 1{n!}\, S(x) 
\int_0^xdS(y)\,\int_0^ym(du)\,\left(\int_0^u M(z)\,dS(z)\right)^n
\\
&&\hskip1.55cm
\leq 
 \frac 1{n!}\, S(x) 
\int_0^xdS(y)\,\left(\int_0^y M(z)\,dS(z)\right)^n M(y)
\\
&&\hskip1.55cm
=
%\leq 
 \frac 1{(n+1)!}\, S(x) 
\left(\int_0^x M(y)dS(y)\right)^{n+1},
\end{eqnarray*}
where we have used the facts that $x\mapsto S(x)$ is increasing and
$x\mapsto M(x)$ is positive.
\end{proof}

\begin{lemma}
\label{lemma2}
The function $x\mapsto C(x;\gamma)$ satisfies the inequality
\begin{equation}
\label{e23}
|C(x;\gamma)|\leq S(x)\,\exp\left(|\gamma|\int_0^x M(z)dS(z)\right).
\end{equation}
\end{lemma}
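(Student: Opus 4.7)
The plan is to combine the series expansion \eqref{e215} for $C(x;\gamma)$ with the termwise bound supplied by the preceding Lemma \ref{lemma1}. This turns the inequality into a direct computation with the Taylor series of the exponential function, so there is essentially no genuine obstacle; the only point to watch is justifying that the series converges absolutely so that the triangle inequality applies term by term.

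First, I would apply the triangle inequality to \eqref{e215}:
\[
|C(x;\gamma)|\leq \sum_{n=0}^\infty |\gamma|^n\, C_n(x),
\]
which is legitimate provided the right-hand side is finite. Next, I would invoke Lemma \ref{lemma1}, which gives
\[
C_n(x)\leq \frac{1}{n!}\, S(x)\left(\int_0^x M(z)\,dS(z)\right)^n,
\]
so that
\[
|C(x;\gamma)|\leq S(x)\sum_{n=0}^\infty \frac{1}{n!}\left(|\gamma|\int_0^x M(z)\,dS(z)\right)^n = S(x)\exp\left(|\gamma|\int_0^x M(z)\,dS(z)\right).
\]
This simultaneously verifies the absolute convergence used to justify term-by-term bounding and delivers the stated estimate.

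The only subtle point is ensuring the quantity $\int_0^x M(z)\,dS(z)$ is finite, so that the exponential bound is meaningful; this is automatic for $x$ in the interior of the state space because $M$ is finite on bounded intervals (the speed measure has no atom at $0$ and is finite on compacts, since $X$ is a regular diffusion with no atoms in $m$) and $S$ is continuous with $S(0)=0$. Thus the bound is finite for every fixed $x\geq 0$ and every $\gamma\in\mathbb{C}$, which in fact recovers the fact (used implicitly in \eqref{e215}) that $C(x;\gamma)$ is an entire function of $\gamma$ of order at most $1/2$ when viewed as a function of $\gamma$ with the natural scaling. No further argument is required.
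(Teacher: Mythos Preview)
Your proof is correct and follows exactly the same approach as the paper: apply the triangle inequality to the series expansion \eqref{e215}, bound each $C_n(x)$ using \eqref{e22} from Lemma~\ref{lemma1}, and recognize the resulting series as the Taylor expansion of the exponential. The paper's proof is just a one-line reference to \eqref{e215} and \eqref{e22}, so your version is simply a more detailed write-up of the same argument.
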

\begin{proof} 
This follows readily from (\ref{e215}) and (\ref{e22}).
\end{proof}

From Krein's
theory of strings it is known (see  \cite{dymmckean76} p.176, and
\cite{kackrein74,
  kuchler80, kotaniwatanabe81}) that there exists a $\sigma$-finite measure denoted
$\Delta,$ called the principal spectral measure of $X$, with the property
\begin{equation} 
\label{repk1}
   \int_0^\infty \frac{\Delta(dz)}{z+1} < \infty
\end{equation}
such that the transition density of $X$ can be represented as 
\begin{equation} 
\label{krein0}
p(t;x,y)= \int_0^\infty {\rm e}^{-\gamma t}\,A(x;\gamma)\,A(y;\gamma)\,\Delta(d\gamma).
\end{equation}
We remark that from the assumption that $m$ does not have an atom at 0
it follows (see  \cite{dymmckean76} p.192) that $\Delta([0,\infty))=\infty.$

Analogously, for the killed process $\widehat X$ there exists (see 
\cite{knight81},
\cite{kuchlersalminen89}) a $\sigma$-finite measure, denoted
$\widehat\Delta$  and called the principal spectral measure of $\widehat X,$
such that
\begin{equation} 
\label{rep}
   \int_0^\infty \frac{\widehat\Delta(dz)}{z(z+1)} < \infty,
\end{equation}
and
\begin{equation} 
\label{repdelta}
   \int_0^\infty \frac{\widehat\Delta(dz)}{z} = \infty.
\end{equation}
The transition density of $\widehat X$ can be represented as 
\begin{equation} \label{krein1}
\hat p(t;x,y)= \int_0^\infty {\rm e}^{-\gamma t}\,C(x;\gamma)\,C(y;\gamma)\,\widehat\Delta(d\gamma).
\end{equation}

The result of the next proposition can be found also in
\cite{kuchlersalminen89}. Since the proof in \cite{kuchlersalminen89}
is not complete in all details we found it worthwhile to give here a
new proof.

\begin{proposition}
\label{prop1}
(i)\ The density of the $\P_x$-distribution of the first hitting time $H_0$ has the spectral representation
\begin{equation} 
\label{krein2}
f_{x0}(t)=\int_0^\infty {\rm e}^{-\gamma t}\,C(x;\gamma)\,\widehat\Delta(d\gamma).
\end{equation}
(ii) The density of the L\'evy measure of the inverse local time at 0 has the spectral representation
\begin{equation} 
\label{krein21}
\dot\nu(t)=\int_0^\infty {\rm e}^{-\gamma t}\,\widehat\Delta(d\gamma).
\end{equation}

\end{proposition}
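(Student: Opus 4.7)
My plan is to deduce (i) and (ii) from the spectral representation (\ref{krein1}) of $\hat p(t;x,y)$ by successively dividing through by $S(y)$ and then by $S(x)$, and in each step passing the limit ($y\downarrow 0$ for (i), $x\downarrow 0$ for (ii)) inside the $\widehat\Delta$-integral. The pointwise ingredient is the normalization $C(z;\gamma)/S(z)\to 1$ from (\ref{e2016}), while the left-hand sides are identified with $f_{x0}(t)$ and $\dot\nu(t)$ by (\ref{f00}) and (\ref{v00}) respectively. All the content of the proof lies in justifying these two interchanges of limit and integral by dominated convergence.

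The crucial ingredient is Lemma \ref{lemma2}, which gives
\begin{equation*}
\Bigl|\frac{C(y;\gamma)}{S(y)}\Bigr| \leq \exp\Bigl(|\gamma|\int_0^y M(z)\,dS(z)\Bigr).
\end{equation*}
Choosing $y_0$ so small that $\delta_{y_0}:=\int_0^{y_0}M(z)\,dS(z)<t/2$ bounds the integrand after division by $S(y)$, uniformly on $(0,y_0]$, by $|C(x;\gamma)|\,e^{-\gamma t/2}$. Cauchy-Schwarz in $\widehat\Delta$ then controls its total mass by $\sqrt{\hat p(t/2;x,x)}\cdot\bigl(\int_0^\infty e^{-\gamma t/2}\widehat\Delta(d\gamma)\bigr)^{1/2}$, the first factor being finite by (\ref{krein1}) taken at $y=x$.

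The hard part is the finiteness of $\int e^{-\gamma u}\widehat\Delta(d\gamma)$ for $u>0$, which is essentially statement (ii) itself and so must be established a priori. This follows directly from (\ref{rep}): the condition $\int \widehat\Delta(d\gamma)/(\gamma(\gamma+1))<\infty$ yields both $\widehat\Delta((0,1])<\infty$ (since $1/(\gamma(\gamma+1))\geq 1/(2\gamma)\geq 1/2$ on $(0,1]$) and $\int_1^\infty\widehat\Delta(d\gamma)/\gamma^2<\infty$. Because $\gamma^2 e^{-\gamma t}$ is bounded in $\gamma\geq 1$, the tail $\int_1^\infty e^{-\gamma t}\widehat\Delta(d\gamma)$ is finite; together with the near-zero bound, this gives $\int_0^\infty e^{-\gamma t}\widehat\Delta(d\gamma)<\infty$ for every $t>0$. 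With absolute convergence secured, dominated convergence delivers (i).

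For (ii) one applies the identical strategy to the identity just proved: divide by $S(x)$, let $x\downarrow 0$ using $C(x;\gamma)/S(x)\to 1$, dominate via Lemma \ref{lemma2} exactly as before, and identify the left-hand side with $\dot\nu(t)$ by (\ref{v00}); the same Cauchy-Schwarz bound gives the required $\widehat\Delta$-integrability. The main obstacle throughout is the apparent circularity in needing $\widehat\Delta$-integrability of $e^{-\gamma t}$ before (ii) has been proved, and this is precisely what the a priori extraction of the growth conditions from (\ref{rep})--(\ref{repdelta}) serves to resolve.
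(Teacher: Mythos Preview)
Your proof is correct and follows essentially the same route as the paper: pass from (\ref{krein1}) to (\ref{krein2}) and then to (\ref{krein21}) by dividing through by $S(y)$ and $S(x)$, dominate $|C(y;\gamma)|/S(y)$ via Lemma~\ref{lemma2} to obtain the integrable majorant $|C(x;\gamma)|\,e^{-\gamma t/2}$, and control the latter by Cauchy--Schwarz against $\hat p(t/2;x,x)$ and $\int e^{-\gamma t/2}\,\widehat\Delta(d\gamma)$. The only difference is that you spell out explicitly how the finiteness of $\int_0^\infty e^{-\gamma u}\,\widehat\Delta(d\gamma)$ follows from (\ref{rep}), whereas the paper simply cites (\ref{rep}) for this; also, for (ii) the majorant is directly $e^{-\gamma t/2}$ and Cauchy--Schwarz is not needed, though invoking it does no harm.
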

\begin{proof}
(i)\ Combining (\ref{f00}) and (\ref{krein1}) yields
\begin{eqnarray*}
&&
f_{x0}(t)=\lim_{y\downarrow 0}\frac{\hat p(t;x,y)}{S(y)}.
\\
&&
\hskip1.15cm
=\lim_{y\downarrow 0}
\int_0^\infty {\rm e}^{-\gamma t}\,C(x;\gamma)\,\frac{C(y;\gamma)}{S(y)}\,\widehat\Delta(d\gamma).
\end{eqnarray*}
We show that the limit can be taken inside the integral by the Lebesgue dominated convergence 
theorem. Let $t>0$ be fixed an choose $\ep$ such that 
$$
t-\int_0^\varepsilon M(z)dS(z)\geq t/2.
$$
Then, from Lemma \ref{lemma2}, for $\gamma>0$ and $0<y<\ep$ we have
$$
 {\rm e}^{-\gamma t}\,\frac{|C(y;\gamma)|}{S(y)}
\leq
\exp\left(-\gamma\left(t-\int_0^y M(z)dS(z)\right)\right)
\leq
 {\rm e}^{-\gamma t/2}
$$
Consequently, it remains to show that
\begin{equation}
\label{e24}
\int_0^\infty {\rm e}^{-\gamma t/2}\left|C(x;\gamma)\right|\widehat\Delta(d\gamma)<\infty.
\end{equation}
By the Cauchy-Schwartz inequality
\begin{eqnarray*}
&&
\left(\int_0^\infty {\rm e}^{-\gamma
  t/2}\left|C(x;\gamma)\right|\widehat\Delta(d\gamma)\right)^2
\\
&&\hskip1.5cm
\leq
\int_0^\infty {\rm e}^{-\gamma t/2}\left(C(x;\gamma)\right)^2\widehat\Delta(d\gamma)
\int_0^\infty {\rm e}^{-\gamma t/2}\widehat\Delta(d\gamma)
\\
&&\hskip1.5cm
=
%\leq 
\hat p(t/2;x,x)
\int_0^\infty {\rm e}^{-\gamma t/2}\widehat\Delta(d\gamma).
\end{eqnarray*}
Clearly, 
$
\hat p(t/2;x,x)<\infty
$
and, by (\ref{rep}), 
$
\int_0^\infty {\rm e}^{-\gamma t/2}\widehat\Delta(d\gamma)<\infty.
$
These estimates allow us to use the Lebesgue dominated 
convergence theorem and since (cf. (\ref{e2016}))
$$
\lim_{y\to 0}C(y;\gamma)/S(y)= C\,'(0;\gamma)=1
$$
the proof of (i) is complete. Representation 
(\ref{krein21}) can be proved similarly using formula (\ref{v00}),
(\ref{krein2}), (\ref{e2016}) and
the estimates derived above. We leave the details to the reader. 
\end{proof}

\begin{remark}
Consider
\begin{eqnarray*}
&&
\hskip-1.7cm
\int_0^\infty (1 \wedge t) \,\dot\nu(t)\,dt  =\int_0^\infty dt\, (1 \wedge
t) 
\int_0^\infty\widehat\Delta(d\gamma)\, {\rm e}^{-\gamma t}
\\
&&
\hskip1.6cm
=
\int_0^\infty\widehat\Delta(d\gamma) \int_0^\infty dt\, (1 \wedge
t)\, {\rm e}^{-\gamma t}.
\end{eqnarray*}
A straightforward integration yields
$$
\int_0^\infty (1 \wedge
t)\, {\rm e}^{-\gamma t}\,dt= \frac 1{\ga^2}\left(1-{\rm e}^{-\ga}\right),
$$
and, consequently, $(\ref{rep})$ is equivalent with 
(cf. \cite{knight81}) 
$$
\int_0^\infty (1 \wedge t) \,\dot\nu(t)\,dt<\infty,
$$
which is the crucial property of  
the L\'evy measure of a subordinator. 
For (\ref{repdelta}), see \cite{kackrein74} p. 82.
and \cite{kuchlersalminen89}.
\end{remark}

\begin{example}
\label{example1} 
Let $R=\{R_t: t\geq 0\}$ and $\widehat R=\{\widehat R_t: t\geq 0\}$ be Bessel processes of dimension $0<\delta<2$
reflected at 0 and killed at 0, respectively.  We
compute explicit spectral representations associated with $R$ and
$\widehat R.$  

From, e.g., \cite{borodinsalminen02} p. 133
 the following information concerning $R$ and   $\widehat R$ can be found:
\begin{description}
\item Speed measure
\begin{equation}
\label{m}
m(dx)=2\,x^{1-2\al}\, dx\quad \quad \al:=(2-\delta)/2.
\end{equation}
\item Scale function
\begin{equation}
\label{s}
S(x)= \frac{1}{2\al}\,x^{2\al}
\end{equation}
\item Transition density  of $R$ (w.r.t. $m$)
\begin{equation}
\label{p}
p(t;x,y)= \frac 1 {2t} (xy)^\al\exp\left(-\frac{x^2+y^2}{2t}\right)
I_{-\al}\left(\frac{xy}{t}\right),\quad x,y>0. 
\end{equation}
\item Transition density  of $\widehat R$ (w.r.t. $m$)
\begin{equation}
\label{hatp}
\hat p(t;x,y)= \frac 1 {2t} (xy)^\al\exp\left(-\frac{x^2+y^2}{2t}\right)
I_{\al}\left(\frac{xy}{t}\right),\quad x,y>0.  
\end{equation}
\end{description}
To find the Krein measure $\Delta$ associated with $R$ we exploit
formulas (\ref{krein0}) and (\ref{p}) with $x=y=0$ and use 
$$
I_\nu(z)\,\sim\,\frac 1{\Gamma(\nu+1)}\left(\frac z2\right)^\nu,\quad
z\to 0
$$
to obtain 
$$
p(t;0,0)=\lim_{x,y\to 0}p(t;x,y)=
\frac{t^{-(1-\al)}}{2^{1-\al}\,\Gamma(1-\al)}=\int_0^\infty {\rm e}^{-\gamma
  t}\,\Delta(d\ga).
$$
Inverting the Laplace transform yields
\begin{equation}
\label{e241}
\Delta(d\ga)=\frac{\gamma^{-\al}\,d\ga}{2^{1-\al}\,(\Gamma(1-\al))^2}.
\end{equation}
We apply formula (\ref{e2150}), (\ref{m}), and (\ref{s}) to find the function $A(x;\ga),$
and, hence, compute first directly via (\ref{e210}) 
$$
A_n(x)=\frac{\Gamma(1-\al)\,x^{2n}}{2^n\,\Gamma(n+1)\,\Gamma(n+1-\al)},\quad n=0,1,2,\dots.
$$
Consequently, after some manipulations, we have 
$$
A(x;\ga)=\Gamma(1-\al)\,2^{-\al}\,\left(x\sqrt{2\ga}\right)^\al\,J_
       {-\al}\left(x\sqrt{2\ga}\right),
$$
where $J$ denotes the usual Bessel function of the first kind, i.e., 
$$
J_\nu(z)=\sum_{n=0}^\infty \frac{(-1)^n(z/2)^{\nu+2n}}{\Gamma(n+1)\,\Gamma(\nu+ n+1)},
$$
and, finally, putting pieces together into (\ref{krein0}) yields
\begin{equation}
\label{e2411}
p(t;x,y)=\frac 12\,\int_0^\infty {\rm e}^{-\ga t}\, (xy)^{\al}\,J_
       {-\al}\left(x\sqrt{2\ga}\right)\,J_
       {-\al}\left(y\sqrt{2\ga}\right)\, d\ga.
\end{equation}
Next we compute the Krein measure $\widehat\Delta$ associated with
$\widehat R.$ For this, we deduce from (\ref{f00}), (\ref{v00}),
(\ref{s}), and (\ref{hatp})
\begin{equation}
\label{e24115}
\dot\nu(t)=\lim_{x,y\to 0}\frac{\hat p(t;x,y)}{S(x)S(y)}
=\frac{2^{1-\al}\, \al\, t^{-(1+\al)}}{\Gamma(\al)}=\int_0^\infty {\rm e}^{-\gamma
  t}\,\widehat\Delta(d\ga),
\end{equation}
and, consequently, inverting the Laplace transform gives
\begin{equation}
\label{e2412}
\widehat \Delta(d\ga)=\frac{2^{1-\al}\, \ga^{\al}}{(\Gamma(\al)) ^2}\, d\ga
\end{equation}
Similarly as above, we apply formula (\ref{e215}) to find the function $C(x;\ga),$
and, hence, compute first directly via (\ref{e21}) 
$$
C_n(x)=\frac{\Gamma(\al)\,x^{2\al+2n}}{2^{n+1}\,\Gamma(n+1)\,\Gamma(n+1+\al)},\quad n=0,1,2,\dots.
$$
Consequently, after some manipulations,
$$
C(x;\ga)=\Gamma(\al)\,2^{(\al-2)/2}\,\ga^{-\al/2}\,x^\al\, J_{\al}\left(x\sqrt{2\ga}\right).
$$
and
\begin{equation}
\label{e2413}
\hat p(t;x,y)=\frac 12\,\int_0^\infty {\rm e}^{-\ga t}\, (xy)^{\al}\,J_
       {\al}\left(x\sqrt{2\ga}\right)\,J_
       {\al}\left(y\sqrt{2\ga}\right)\, d\ga.
\end{equation}
See also Karlin and Taylor \cite{karlintaylor81} p. 338.
\end{example}
\begin{example}
\label{example11} Taking above $\alpha=1/2$ yields formulas for
Brownian motion. Recall 
$$
J_{1/2}(z)=\sqrt{\frac 2{\pi z}}\,\sin{z},
\quad \text{and}\quad 
J_{-1/2}(z)=\sqrt{\frac 2{\pi z}}\,\cos{z}.
$$ 
Consequently, from (\ref{e2411})
\begin{eqnarray}
\label{ex1}
&&
p(t;x,y)=\frac 1{\pi}\int_0^\infty {\rm e}^{-\gamma\,t}\cos(x\sqrt{2\ga})\cos(y\sqrt{2\ga})\, \frac{d\gamma}{\sqrt{2\ga}}
%p(t;x,y)=\frac 1\pi\int_0^\infty {\rm e}^{-\gamma^2t/2}\cos(\gamma x)\cos(\gamma y)\, d\gamma
\\
&&
\nonumber
\hskip1.6cm
=\frac{1}{2\sqrt{2\pi t}}\left({\rm e}^{-(x-y)^2/(2t)}+{\rm e}^{-(x+y)^2/(2t)}\right),
\end{eqnarray}
and from  (\ref{e2413})   
\begin{eqnarray*}
&&
\hat p(t;x,y)=\frac 1\pi\int_0^\infty {\rm e}^{-\gamma
  t}\,\frac{\sin(x\sqrt{2\ga})}{\sqrt{2\ga}}\,
\frac{\sin(y\sqrt{2\ga})}{\sqrt{2\ga}}\,{\sqrt{2\ga}}\,d\ga
\\
&&
\hskip1.6cm
=\frac{1}{2\sqrt{2\pi t}}\left({\rm e}^{-(x-y)^2/(2t)}-{\rm e}^{-(x+y)^2/(2t)}\right).
\end{eqnarray*}
Moreover,
\begin{eqnarray*}
&&
f_{x0}(t)
=
\frac 1\pi\int_0^\infty\,{\rm e}^{-\gamma t}\,\sin(x\sqrt{2\ga})\, d\gamma
%=\lim_{y\downarrow 0}\frac 1\pi\int_0^\infty {\rm e}^{-\gamma^2t/2}\sin(\gamma x)\frac{\sin(\gamma y)}{y}\, d\gamma
%\\
%&&
%\\
%&&
%\hskip1.2cm
=\frac{x}{t^{\,3/2}\sqrt{2\pi}}\,{\rm e}^{-x^2/(2t)},
\end{eqnarray*}
and
\begin{eqnarray}
\label{ex2}
&&
\dot\nu(t)=
\frac 1\pi\int_0^\infty {\rm e}^{-\gamma t} \,\sqrt{2\ga}\,d\gamma
%\\
%&&
%\hskip1.2cm
=\frac1{t^{\,3/2}\sqrt{2\pi}}.
\end{eqnarray}
From (\ref{ex1}) we obtain  $\Delta(d\ga)=d\ga/(\pi\sqrt{2\ga}),$ and  
from (\ref{ex2}) $\widehat\Delta(d\ga)=\sqrt{2\ga}\,d\ga/\pi.$
See also Karlin and Taylor \cite{karlintaylor81} p. 337 and 393, and \cite{borodinsalminen02} p. 120.
\end{example}
%he

\begin{proposition}
\label{prop2}
(i)\ The complementary $\P_x$-distribution function of $H_0$ has the spectral representation
\begin{equation} 
\label{krein4}
\P_x(H_0>t)=\int_0^\infty \frac 1\gamma\, {\rm e}^{-\gamma t}\, \,C(x;\gamma)\,\widehat\Delta(d\gamma).
\end{equation}
(ii)\ The L\'evy measure has the spectral representation
\begin{equation} 
\label{krein41}
\nu((t,\infty))=\int_t^{\infty} \dot\nu(s)\,ds =\int_0^\infty \frac 1\gamma\, {\rm e}^{-\gamma t}\, \widehat\Delta(d\gamma).
\end{equation}

\end{proposition}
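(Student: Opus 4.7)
My plan is to derive both formulas from the spectral representations of Proposition \ref{prop1} by integrating in the time variable over $(t,\infty)$ and interchanging the order of integration with $\widehat\Delta$. Part (ii) is the easier of the two: the integrand $e^{-\gamma v}$ is nonnegative, so Tonelli's theorem applies unconditionally. The remark following Proposition \ref{prop1} records that $\int_0^\infty (1\wedge v)\,\dot\nu(v)\,dv<\infty$, which implies $\nu((t,\infty))<\infty$ for every $t>0$; then Tonelli combined with Proposition \ref{prop1}(ii) yields
$$
\nu((t,\infty))=\int_t^\infty\dot\nu(v)\,dv
=\int_0^\infty\widehat\Delta(d\gamma)\int_t^\infty e^{-\gamma v}\,dv
=\int_0^\infty\frac{e^{-\gamma t}}{\gamma}\,\widehat\Delta(d\gamma).
$$

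For (i) the function $\gamma\mapsto C(x;\gamma)$ is not of constant sign, so Tonelli is unavailable and I will have to verify the absolute integrability hypothesis of Fubini, namely
$$
\int_0^\infty |C(x;\gamma)|\,\frac{e^{-\gamma t}}{\gamma}\,\widehat\Delta(d\gamma)<\infty.
$$
Once this is in hand, Fubini will produce
$$
\int_0^\infty\frac{e^{-\gamma t}}{\gamma}\,C(x;\gamma)\,\widehat\Delta(d\gamma)
=\int_t^\infty ds\int_0^\infty e^{-\gamma s}\,C(x;\gamma)\,\widehat\Delta(d\gamma)
=\int_t^\infty f_{x0}(s)\,ds=\P_x(H_0>t),
$$
the middle equality being Proposition \ref{prop1}(i) and the last one a consequence of the recurrence of $X$.

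To control the absolute integral I would split the $\gamma$-range at $\gamma=1$. On $(0,1]$, Lemma \ref{lemma2} supplies the uniform bound $|C(x;\gamma)|\le S(x)\exp\!\bigl(\int_0^x M(z)\,dS(z)\bigr)$, so $|C(x;\gamma)|/\gamma$ is dominated by a constant multiple of $1/\gamma$; the assumption (\ref{rep}) gives the required integrability since $1/\gamma\le 2/(\gamma(\gamma+1))$ on $(0,1]$ implies $\int_0^1\widehat\Delta(d\gamma)/\gamma<\infty$. On $[1,\infty)$ I would use $|C(x;\gamma)|/\gamma\le|C(x;\gamma)|$ together with the Cauchy--Schwartz inequality already employed in the proof of Proposition \ref{prop1}:
$$
\int_1^\infty|C(x;\gamma)|\,e^{-\gamma t}\,\widehat\Delta(d\gamma)
\le\left(\int_0^\infty C(x;\gamma)^2 e^{-\gamma t}\widehat\Delta(d\gamma)\right)^{1/2}
   \left(\int_0^\infty e^{-\gamma t}\widehat\Delta(d\gamma)\right)^{1/2}
=\sqrt{\hat p(t;x,x)\,\dot\nu(t)},
$$
which is finite for every $t>0$ thanks to (\ref{krein1}) and (\ref{krein21}). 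The main point to get right is the estimate near $\gamma=0$: this is precisely where the spectral integrability condition (\ref{rep}) for $\widehat\Delta$ enters in an essential way, reflecting the fact that the additional factor $1/\gamma$ coming from the time-integration $\int_t^\infty e^{-\gamma s}\,ds$ is delicate at the left endpoint of the spectrum.
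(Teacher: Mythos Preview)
Your proposal is correct and follows essentially the same route as the paper: both derive (\ref{krein4}) and (\ref{krein41}) from Proposition~\ref{prop1} by integrating in $t$, handle (ii) directly via Tonelli, and justify Fubini for (i) by splitting the $\gamma$-integral and combining the bound of Lemma~\ref{lemma2} with (\ref{rep}) near $\gamma=0$ and a Cauchy--Schwarz argument (pairing with $\hat p(t;x,x)$) away from $0$. The only cosmetic differences are that the paper splits at a generic $\varepsilon>0$ rather than at $\gamma=1$, and in the large-$\gamma$ piece it keeps the factor $\gamma^{-1}$ inside the Cauchy--Schwarz pairing (obtaining $\int_\varepsilon^\infty\gamma^{-2}e^{-\gamma t}\widehat\Delta(d\gamma)$ as one factor) instead of first discarding it via $\gamma^{-1}\le 1$ as you do.
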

\begin{proof}
Formulas (\ref{krein4}) and (\ref{krein41}) follow from (\ref{krein2})
and (\ref{krein21}), respectively, using Fubini's theorem. To obtain
(\ref{krein41}) is straightforward but for (\ref{krein4})
the
applicability of Fubini's theorem needs to be justified.
Indeed, from (\ref{krein2}) we have informally
\begin{eqnarray*}
&&
\P_x(H_0>t)=\int_t^\infty f_{x0}(s)\,ds =\int_t^\infty
ds\int_0^\infty\widehat\Delta(d\gamma)\, {\rm e}^{-\gamma s}\,C(x;\gamma)
\\
&&
\hskip2.1cm
=\int_0^\infty\widehat\Delta(d\gamma)\int_t^\infty
ds\, {\rm e}^{-\gamma s}\,C(x;\gamma)
\end{eqnarray*}
leading to (\ref{krein4}). To make this rigorous, we verify that for
all $x>0$ 
$$
\int_0^\infty \frac 1\gamma\, {\rm e}^{-\gamma t} \,|C(x;\gamma)|\,\widehat\Delta(d\gamma)<\infty.
$$
Consider first for $\varepsilon>0$ 
$$
K_1:=\int_0^\varepsilon  \frac 1\gamma\, {\rm e}^{-\gamma t}\,|C(x;\gamma)|\,\widehat\Delta(d\gamma).
$$
By the basic estimate (\ref{e23}) for $0<\gamma<\ep$
$$
|C(x;\gamma)|\leq S(x)\,\exp\left(\ep\,\int_0^x M(z)dS(z)\right).
$$
and, consequently,
$$
K_1\leq S(x)\,\exp\left(\ep\,\int_0^x M(z)dS(z)\right)
\int_0^\infty \frac 1\gamma\, {\rm e}^{-\gamma
  t}\,\widehat\Delta(d\gamma)<\infty
$$
by (\ref{rep}). Next, let
$$
K_2:=\int_\ep^\infty  \frac 1\gamma\, {\rm e}^{-\gamma t}\,|C(x;\gamma)|\,\widehat\Delta(d\gamma).
$$
By the Cauchy-Schwartz inequality
$$
K_2^2\leq \int_\ep^\infty  \gamma^{-2} \, {\rm e}^{-\gamma
  t}\,\widehat\Delta(d\gamma)\ 
\int_\ep^\infty  \, {\rm e}^{-\gamma t}\,(C(x;\gamma))^2\,\widehat\Delta(d\gamma).
$$
The first term on the right hand side is finite by (\ref{rep}). For
the second term we have 
\begin{eqnarray*}
&&
\int_\ep^\infty  \, {\rm e}^{-\gamma t}\,(C(x;\gamma))^2\,\widehat\Delta(d\gamma)
\leq 
\int_0^\infty  \, {\rm e}^{-\gamma t}\,(C(x;\gamma))^2\,\widehat\Delta(d\gamma).
\\
&&
\hskip4.7cm
\leq \hat p(t;x,x)<\infty.
\end{eqnarray*}
The proof of (\ref{krein4}) is now complete. 
\end{proof}

\section{Asymtotic behavior of the distribution of $L_t$ as $t\to+\infty$}
\label{sec3}

We make the following assumption concerning the 
L\'evy measure of the inverse local time process 
$\{\tau_\ell\,:\,\ell\geq 0\}$ valid throughout the rest of the  paper (if nothing else is stated)
\begin{description}
\item{(A)}\hskip3mm {\sl The probability distribution function
$$   
 x\mapsto \frac{\nu(1,x]}{\nu(1,+\infty)},\quad x>1,
$$
is assumed to be subexponential.}
\end{description}

It is known, see Sato \cite{sato99} p. 164, that Assumption (A) is
equivalent with  
\begin{equation}
\label{3e00}
\P(\tau_\ell\geq t) \,\mathop{\sim}_{t\to +\infty}\,
\ell\,\nu((t,+\infty))\quad  \forall\ \ell>0,
\end{equation}
and also with
\begin{equation}
\label{3e000}
{\text{\sl The\ law\ of\ }} \tau_\ell\ {\text{\sl
    is\ subexponential\ for\ every\ }} \ell>0.
\end{equation}

\begin{proposition}
\label{prop31}
For any fixed $\ell>0$, it holds 
\begin{equation}
\label{3e0}
 \P_0(L_t\leq \ell) \,\mathop{\sim}_{t\to +\infty}\, \ell\,\nu((t,+\infty)).
\end{equation}
\end{proposition}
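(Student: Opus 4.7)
The plan is to reduce the statement directly to the equivalent form (\ref{3e00}) of Assumption (A) by exploiting the fact that $\tau$ is the right-continuous inverse of the continuous increasing process $L$. More precisely, I would argue that for every $t>0$ and $\ell>0$ one has the set-theoretic identity
$$
\{L_t\leq \ell\}=\{\tau_\ell\geq t\}\qquad \P_0\text{-a.s.}
$$

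First, from the definition $\tau_\ell=\inf\{s:L_s>\ell\}$ and the monotonicity of $L$, the event $\{\tau_\ell>t\}$ coincides with $\{L_s\leq\ell\text{ for all }s\leq t\}=\{L_t\leq\ell\}$ (using that $L$ is non-decreasing). On the other hand, since $L$ is continuous in $t$, the event $\{\tau_\ell=t\}$ is contained in $\{L_t=\ell\}$, which differs from $\{L_t\leq\ell\}$ only on a set of $\P_0$-measure zero: indeed, by Proposition \ref{prop1}(ii), the L\'evy measure $\nu$ of $\tau$ has a density, hence $\tau_\ell$ has a continuous distribution and $\P_0(\tau_\ell=t)=0$. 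Combining these observations, we obtain
$$
\P_0(L_t\leq \ell)=\P_0(\tau_\ell\geq t).
$$

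With this identification in hand, the conclusion is immediate: Assumption (A) is equivalent to (\ref{3e00}) (cited from Sato \cite{sato99}), which asserts precisely
$$
\P_0(\tau_\ell\geq t)\,\mathop{\sim}_{t\to +\infty}\,\ell\,\nu((t,+\infty)),
$$
and substituting this into the equality above yields (\ref{3e0}).

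The only mild subtlety is the handling of the boundary $\{\tau_\ell=t\}$ in passing between the events $\{L_t\leq\ell\}$ and $\{\tau_\ell\geq t\}$; this is harmless because the density representation (\ref{krein21}) of $\dot\nu$ guarantees that $\tau_\ell$ has no atoms. All the real work has already been done upstream, in the equivalence between subexponentiality of the tail distribution of $\nu$ and the asymptotic (\ref{3e00}) for $\tau_\ell$; the present proposition is merely its translation through the inverse relation.
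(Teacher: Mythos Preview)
Your proof is correct and follows exactly the same route as the paper: the paper's proof is the single line ``The claim follows immediately from (\ref{3e00}) since $\P_0(L_t\leq \ell)= \P(\tau_\ell\geq t)$.'' You have supplied the extra care of justifying why the boundary event $\{\tau_\ell=t\}$ is null, which the paper leaves implicit.
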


\begin{proof}
The claim follows immediately from (\ref{3e00}) since 
$$
\P_0(L_t\leq \ell)= \P(\tau_\ell\geq t).
$$
\end{proof}

Our goal is to study the asymptotic behavior of $L_t$ under $\P_x.$ 
For this, we analyze first the distribution of the hitting
time $H_0.$ The proof of the next proposition is based on Lemma
\ref{lemma31} stated and proved in Section \ref{20} below.

\begin{proposition}
\label{prop32}
For any $x> 0,$ it holds 
\begin{equation}
\label{3e1}
 \P_x(H_0>t) \,\mathop{\sim}_{t\to +\infty}\, S(x)\,\nu((t,+\infty)).
\end{equation}
\end{proposition}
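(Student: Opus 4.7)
The plan is to use the spectral representations from Proposition \ref{prop2},
$$\P_x(H_0>t)=\int_0^\infty \frac{1}{\gamma}\, {\rm e}^{-\gamma t}\, C(x;\gamma)\,\widehat\Delta(d\gamma),\qquad \nu((t,+\infty))=\int_0^\infty \frac{1}{\gamma}\, {\rm e}^{-\gamma t}\,\widehat\Delta(d\gamma),$$
together with the fact that $C(x;0)=S(x)$ (read off from the series (\ref{e215}) with $C_0(x)=S(x)$). Heuristically, as $t\to+\infty$ the factor ${\rm e}^{-\gamma t}$ forces both integrals to concentrate near $\gamma=0$, so that the ratio $\P_x(H_0>t)/\nu((t,+\infty))$ should tend to $S(x)$.

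To make this rigorous, I would form the difference
$$D(t):=\P_x(H_0>t)-S(x)\,\nu((t,+\infty))=\int_0^\infty \frac{1}{\gamma}\,{\rm e}^{-\gamma t}\,(C(x;\gamma)-S(x))\,\widehat\Delta(d\gamma).$$
The integral equation (\ref{e201}) gives $C(x;\gamma)-S(x)=-\gamma R(x;\gamma)$ with $R(x;\gamma):=\int_0^x dS(y)\int_0^y m(dz)\,C(z;\gamma)$, so the $1/\gamma$ cancels, and Fubini together with Proposition \ref{prop1}(i) yields
$$D(t)=-\int_0^x dS(y)\int_0^y m(dz)\,f_{z0}(t).$$
Using Lemma \ref{lemma2}, for $z\in[0,x]$ and $t>K_x:=\int_0^x M(u)\,dS(u)$ one has $f_{z0}(t)\leq S(x)\,\dot\nu(t-K_x)$, hence $|D(t)|\leq S(x)\,K_x\,\dot\nu(t-K_x)$. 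The proof therefore reduces to showing $\dot\nu(t-K_x)=o\bigl(\nu((t,+\infty))\bigr)$.

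This last estimate is where the weak Tauberian Lemma \ref{lemma31} of the appendix enters, and it is the principal obstacle. The angle of attack I would pursue is that $\dot\nu(t)=\int_0^\infty {\rm e}^{-\gamma t}\widehat\Delta(d\gamma)$ is completely monotone, hence decreasing, so the mean-value inequality $\dot\nu(t+1)\leq\nu((t,+\infty))-\nu((t+1,+\infty))$ combined with the long-tail property $\nu((t+1,+\infty))\sim\nu((t,+\infty))$ furnished by Lemma \ref{uni}(i) under Assumption (A) gives $\dot\nu(t)=o(\nu((t,+\infty)))$. A further application of Lemma \ref{uni}(i) absorbs the shift from $t-K_x$ to $t$ and completes the argument.
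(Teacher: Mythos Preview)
Your argument is correct, but it takes a genuinely different route from the paper's.

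The paper applies the appendix Lemma \ref{lemma31} directly, with $\mu(d\gamma)=\widehat\Delta(d\gamma)/\gamma$, $g_1(\gamma)=C(x;\gamma)$ and $g_2(\gamma)\equiv S(x)$; condition (\ref{c1}) is verified from the subexponentiality consequence (\ref{ee051}), and condition (\ref{c2}) from the series (\ref{e215}) together with the estimate (\ref{e22}). So in the paper, Lemma \ref{lemma31} does all the work in one stroke at the top level, comparing the two Laplace-type integrals (\ref{krein4}) and (\ref{krein41}) directly.

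Your approach never touches Lemma \ref{lemma31}. By feeding the integral equation (\ref{e201}) back into the spectral representation you cancel the $1/\gamma$ and obtain the exact identity
$$\P_x(H_0>t)-S(x)\,\nu((t,+\infty))=-\int_0^x dS(y)\int_0^y m(dz)\,f_{z0}(t),$$
which is a nicer structural statement than anything the paper writes down. The estimate $f_{z0}(t)\le S(x)\,\dot\nu(t-K_x)$ via Lemma \ref{lemma2} is clean, and the last step $\dot\nu(t)=o(\nu((t,+\infty)))$ is where your proof is more elementary: you exploit the complete monotonicity of $\dot\nu$ and the long-tail property (\ref{ee05}) rather than the heavier growth property (\ref{ee051}). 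One small correction of emphasis: your sentence ``this last estimate is where Lemma \ref{lemma31} enters'' is misplaced --- in the paper's argument Lemma \ref{lemma31} is not used to bound $\dot\nu/\nu$ but is applied wholesale to the pair $(\P_x(H_0>t),\,S(x)\nu((t,+\infty)))$. Your path bypasses that lemma entirely, at the cost of the extra identity above; the paper's path is shorter once the general lemma is in hand but requires proving that lemma separately.
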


\begin{proof}
Recall from  (\ref{krein4}) and (\ref{krein41}) in Proposition
\ref{prop2} the spectral representations 
\begin{equation} 
\label{krein4n}
\P_x(H_0>t)=\int_0^\infty \frac 1\gamma\, {\rm e}^{-\gamma t}\, \,C(x;\gamma)\,\widehat\Delta(d\gamma)
\end{equation}
and 
\begin{equation} 
\label{krein41n}
\nu((t,+\infty))=\int_0^\infty \frac 1\gamma\, {\rm e}^{-\gamma t}\, \widehat\Delta(d\gamma).
\end{equation}
We apply Lemma \ref{lemma31} with $\mu(d\ga)=\widehat
\Delta(d\ga)/\ga,$ $g_1(\ga)=C(x;\gamma)$ and $g_2(\ga)=S(x).$ Then,
the mapping $t\mapsto \P_x(H_0>t)$ has the r\^ole of 
$f_1$ and $t\mapsto S(x)\,\nu((t,+\infty))$ the r\^ole of $f_2.$ 
Condition (\ref{c1}) takes the form
$$
\lim_ {t\to\infty}S(x)\,\nu((t,+\infty))\,{\rm e}^{bt}=0
$$
and this holds by Assumption (A) and (\ref{ee051}).
Moreover, condition (\ref{c2}) means now 
$$
\lim_{\gamma\to 0}C(x;\gamma)/S(x)=1  
$$
and this is true since using estimate (\ref{e22}) in (\ref{e215}) we obtain
$$
\left|\frac{C(x;\gamma)}{S(x)}-1\right|\leq \al\,|\gamma| {\rm e}^{\be\,|\gamma|}   
$$
with some $\al$ and $\be$ depending only on $x.$
Consequently, (\ref{c3}) in
Lemma \ref{lemma31}  holds and, hence, the proof of the proposition is complete.
\end{proof}

The main result of this section is as follows.

\begin{proposition}
\label{prop33}
For any $x> 0$ and $\ell>0,$  it holds 
\begin{equation}
\label{3e2}
 \P_x(L_t\leq \ell) \,\mathop{\sim}_{t\to +\infty}\, (S(x)+\ell)\,\nu((t,+\infty)).
\end{equation}
\end{proposition}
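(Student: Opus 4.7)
The plan is to decompose $L_t$ under $\P_x$ via the hitting time $H_0$ and recognize the resulting distribution as a convolution, so that Lemma \ref{prop0} applies directly.

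First, I would use the strong Markov property at $H_0$ together with the fact that, under $\P_x$ with $x>0$, the local time $L$ is identically zero up to $H_0$ and thereafter accumulates as it would under $\P_0$. This gives the identity in distribution $\tau_\ell = H_0 + \tau'_\ell$ under $\P_x$, where $\tau'_\ell$ is independent of $H_0$ and has the $\P_0$-law of $\tau_\ell$. Since $\{L_t>\ell\} = \{\tau_\ell \le t\}$, letting $F$ be the $\P_x$-distribution function of $H_0$ and $G$ the $\P_0$-distribution function of $\tau_\ell$ yields
\begin{equation*}
\P_x(L_t \le \ell) \;=\; 1 - (F*G)(t) \;=\; \overline{F*G}(t).
\end{equation*}
Recurrence of $X$ ensures $F$ and $G$ are genuine probability distributions on $\R_+$.

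Next, I would collect the tail asymptotics. Proposition \ref{prop32} gives $\overline F(t) \sim S(x)\,\nu((t,+\infty))$ and Proposition \ref{prop31} gives $\overline G(t) \sim \ell\,\nu((t,+\infty))$; in particular
\begin{equation*}
\lim_{t\to\infty} \frac{\overline F(t)}{\overline G(t)} \;=\; \frac{S(x)}{\ell} \;>\; 0,
\end{equation*}
since $x>0$ forces $S(x)>0$ (the scale function is strictly increasing for a regular diffusion with $S(0)=0$). Assumption (A) together with (\ref{3e000}) provides that $G$, the law of $\tau_\ell$, is subexponential.

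Now I would apply Lemma \ref{prop0} with the subexponential distribution $G$ and the auxiliary distribution $F$ (swapping the roles in the lemma's notation, which is symmetric in the conclusion). It gives
\begin{equation*}
\overline{F*G}(t) \;\sim\; \overline F(t) + \overline G(t) \;\sim\; \bigl(S(x)+\ell\bigr)\,\nu((t,+\infty)),
\end{equation*}
which is exactly (\ref{3e2}). The substantive step is the probabilistic decomposition yielding the convolution; once that is in place, the analytic work has already been done in Lemma \ref{prop0} and in Propositions \ref{prop31}--\ref{prop32}, so no further obstacle arises. The only point to handle carefully is the verification that the ratio of tails has a strictly positive limit, which would fail at $x=0$ but is harmless here since the statement is for $x>0$ (and indeed reduces correctly to Proposition \ref{prop31} in the limit $S(x)\downarrow 0$).
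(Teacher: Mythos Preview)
Your proof is correct and follows essentially the same route as the paper: both reduce $\P_x(L_t\le\ell)$ to the tail of the convolution of the $\P_x$-law of $H_0$ with the $\P_0$-law of $\tau_\ell$, then invoke Propositions \ref{prop31}--\ref{prop32} and Lemma \ref{prop0}. The paper writes out the decomposition via $\P_x(L_t\le\ell)=\P_x(H_0>t)+\P_x(H_0<t,\,L_{t-H_0}\circ\theta_{H_0}\le\ell)$ and assigns the roles $F=\text{law}(\tau_\ell)$, $G=\text{law}(H_0)$ in Lemma \ref{prop0}, whereas you swap $F$ and $G$; since the conclusion of Lemma \ref{prop0} is symmetric this is immaterial.
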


\begin{proof} 
Since $L_t$ increases only when $X$ is at 0 we may write
\begin{eqnarray*}
%\label{3e3}
&& 
\P_x(L_t\leq \ell) =  \P_x(H_0>t) + \P_x(H_0< t\,,\, L_t\leq \ell)
\\
&&
\hskip2.1cm
=
\P_x(H_0>t) + \P_x(H_0< t\,,\, L_{t-H_0}\circ\te_{H_0}\leq \ell)
\\
&&
\hskip2.1cm
= 
\P_x(H_0>t) + \P_x(H_0< t\,,\, t-H_0\leq \hat \tau_\ell),
\end{eqnarray*}
where $\te_\cdot$ denotes the usual shift operator and  $\hat
\tau_\ell$ is a subordinator starting from 0, independent of $H_0$ and
identical in law with $\tau_\ell$ (under $\P_0$), by the strong Markov property.   
Consequently, 
\begin{eqnarray*}
&&
\hskip-.5cm
\P_x(L_t\leq \ell)=\P_x(H_0>t) +  \P_x( \hat\tau_\ell+H_0\geq t) -\P_x( \hat\tau_\ell+H_0\geq t\,,\, H_0> t)
\\
&&
\hskip1.6cm
= 
\P_x( \hat\tau_\ell+H_0\geq t).
\end{eqnarray*}
We use Lemma \ref{prop0} and take therein $F$ to be the $P_x$-distribution
$\hat\tau_\ell$ (which is the same as the $P_0$-distribution
$\tau_\ell$) and $G$ the $P_x$-distribution of $H_0.$ Then, by
(\ref{3e000}), $F$ is subexponential and from (\ref{3e0}) and
(\ref{3e1}) we have 
$$
\lim_{t\to\infty}\frac{\P_x(H_0> t)}{\P_x( \hat\tau_\ell> t)}=\frac{S(x)}\ell>0. 
$$
Consequently, by Lemma \ref{prop0},
$$
\lim_{t\to\infty}\frac{\P_x( \hat\tau_\ell+H_0> t)}{\P_x(
  \hat\tau_\ell> t)+\P_x( H_0> t)}
=1,
$$
in other words, 
\begin{eqnarray*}
&&
\P_x(L_t\leq \ell)
\,\mathop{\sim}_{t\to \infty}\, 
\P_x( H_0> t)+\P_x(
  \hat\tau_\ell> t)
\\
&&
\hskip2cm
\,\mathop{\sim}_{t\to \infty}\, 
S(x)\,\nu((t,\infty))+\ell\,\nu((t,\infty)),
\end{eqnarray*}
as claimed.
\end{proof}

\begin{example}
\label{bessel1}
{\rm For a Bessel process of dimension $d\in(0,2)$ reflected at 0 
we have from (\ref{e24115}) in  Example \ref{example1}
$$
\nu((t,+\infty))=\frac{2^{1-\al}}{\Gamma(\al)}\,t^{-\al},
$$
and  Assumption (A) holds by Lemma \ref{slovar}. Consequently,
$$
\P_x(L_t<\ell) 
\,\mathop{\sim}_{t\to \infty}\, 
(S(x)+\ell)\,\nu((t,+\infty)).
$$
where the scale function is as in Example \ref{example1}. Taking here
$\al=1/2$ gives formulae for reflecting Brownian motion. We remark
that our normalization of the local time (see (\ref{e000})) is different from the one
used in  Roynette
et al. \cite{rvyV} Section 2. In our case,  from (\ref{e1}) and
(\ref{e24115}) it follows (cf. also   \cite{borodinsalminen02} p. 133
where the resolvent kernel is explicitly given) that
\begin{equation}
\label{cf1}
\E_0\left(\exp(-\lambda
\tau_\ell)\right)=\exp\left(-\ell\, \frac{\Gamma(1-\al)}{\Gamma(\al)}\,
2^{1-\al}\, \lambda^{\al}\right).
\end{equation}
Comparing now formula (2.11) in
\cite{rvyV} with (\ref{cf1}) it is seen that 
$$
\widehat L_t = 2\al\, L_t
$$
where $\widehat L$ denotes the local time used in \cite{rvyV}.
}

\end{example}

%hi

\section{Penalization of the diffusion with its local time}
\label{sec6}

\subsection{General theorem of penalization}
\label{sec61} 
Recall that $(C,\cC,\{\cC_t\})$ denotes the canonical space of continuous functions, and let $\P$ be a probability measure defined therein. In the next theorem we present the general penalization result which we then specialize to the penalization with local time.

\begin{theorem}
\label{thm61}
Let $\{F_t: t\geq 0\}$ be a stochastic process (so called weight process) satisfying
$$
0<\E(F_t)<\infty\quad \forall\ t>0.
$$    
Suppose that for any $u\geq 0$ 
\begin{equation}
\label{61}
\lim_{t\to\infty}\frac{\E(F_t\,|\,\cC_u)}{\E(F_t)}=:M_u
\end{equation}
exists a.s. and 
\begin{equation}
\label{62}
\E(M_u)=1.
\end{equation}
Then 
\begin{description}
\item{1)} $M=\{M_u: u\geq 0\}$ is a non-negative martingale with $M_0=1,$
\item{2)} for any $u\geq 0$ and $\Lambda\in\cC_u$ 
\begin{equation}
\label{63}
\lim_{t\to\infty}\frac{\E({\bf 1}_{\Lambda}\,F_t)}{\E(F_t)}=\E({\bf 1}_{\Lambda}\,M_u)=:\Q^{(u)}(\Lambda),
\end{equation}
\item{3)} there exits  a probability measure $\Q$ on $(C,\cC)$ such
  that for any $u>0$ 
$$
\Q(\Lambda)=\Q^{(u)}(\Lambda)\qquad \forall \Lambda\in\cC_u.  
$$ 
\end{description}
\end{theorem}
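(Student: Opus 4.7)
The plan starts from the observation that the random variables $X_t:=\E(F_t\mid\cC_u)/\E(F_t)$ are non-negative of $\P$-integral $1$. Fatou's lemma together with the hypothesised a.s. convergence $X_t\to M_u$ yields $\E(M_u)\leq 1$, and the assumption $\E(M_u)=1$ then upgrades this to $L^1$ convergence by Scheff\'e's lemma,
\[
\lim_{t\to\infty}\E\Bigl(\Bigl|\frac{\E(F_t\mid\cC_u)}{\E(F_t)}-M_u\Bigr|\Bigr)=0.
\]
This $L^1$ convergence is the engine of the whole argument. Non-negativity of $M_u$ is automatic from the a.s. limit, while $M_0=1$ follows by taking $u=0$ since $\cC_0$ is $\P$-trivial and hence $\E(F_t\mid\cC_0)=\E(F_t)$.

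I would then treat 1) and 2) in one stroke. Fix $s\leq u$ and $\Lambda\in\cC_s\subseteq\cC_u$. Since $\indic_\Lambda$ is $\cC_u$-measurable, the tower property gives
\[
\frac{\E(\indic_\Lambda F_t)}{\E(F_t)}=\E\Bigl(\indic_\Lambda\,\frac{\E(F_t\mid\cC_u)}{\E(F_t)}\Bigr),
\]
and the right-hand side converges to $\E(\indic_\Lambda M_u)$ by the $L^1$ convergence above. This proves (\ref{63}). Applying the same identity at the level $s$ yields $\E(\indic_\Lambda M_u)=\E(\indic_\Lambda M_s)$ for every $\Lambda\in\cC_s$, which is exactly $\E(M_u\mid\cC_s)=M_s$; together with non-negativity and $M_0=1$ this gives 1).

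For 3) the consistency of the family $\{\Q^{(u)}\}$ falls out of the martingale identity just established: for $s\leq u$ and $\Lambda\in\cC_s$,
\[
\Q^{(u)}(\Lambda)=\E(\indic_\Lambda M_u)=\E(\indic_\Lambda\,\E(M_u\mid\cC_s))=\E(\indic_\Lambda M_s)=\Q^{(s)}(\Lambda).
\]
Thus the set function $\Q_0(\Lambda):=\Q^{(u)}(\Lambda)$, defined for $\Lambda\in\cC_u$ and any such $u$, is unambiguous and finitely additive on the algebra $\mathcal{A}:=\bigcup_{u\geq 0}\cC_u$, which generates $\cC$.

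The genuine obstacle is promoting $\Q_0$ to a $\sigma$-additive probability on $\cC$. This is not automatic, since the martingale $\{M_u\}$ need not be uniformly integrable and hence the target measure $\Q$ may be singular with respect to $\P$ — it cannot just be realised as $M_\infty\cdot\P$. I would handle it by invoking the Polish structure of the canonical path space $C=C(\R_+,\R_+)$: each $\cC_u$ is countably generated by the coordinate projections, each $\Q^{(u)}$ is tight on $(C,\cC_u)$ by Ulam's theorem, and the standard projective-limit extension theorem of Kolmogorov/Parthasarathy for a consistent family of probabilities on a Polish space yields a unique probability measure $\Q$ on $(C,\cC)$ whose restriction to each $\cC_u$ agrees with $\Q^{(u)}$, completing the proof.
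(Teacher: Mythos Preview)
Your proof is correct and follows essentially the same route as the paper: Scheff\'e's lemma turns the a.s.\ convergence plus $\E(M_u)=1$ into $L^1$ convergence, the tower identity $\E(\indic_\Lambda F_t)/\E(F_t)=\E\bigl(\indic_\Lambda\,\E(F_t\mid\cC_u)/\E(F_t)\bigr)$ then gives (\ref{63}), comparing the limit at two levels $s<u$ yields the martingale property, and Kolmogorov's extension theorem on the canonical space produces $\Q$. Your write-up is in fact slightly more explicit than the paper's (you spell out Fatou before Scheff\'e, address $M_0=1$, and discuss the Polish-space ingredients behind the extension), but the argument is the same.
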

\begin{proof}  We have (cf. Roynette et al. \cite{rvyII})
$$
\frac{\E({\bf 1}_{\Lambda_u}\,F_t)}{\E(F_t)}=\E\left({\bf 1}_{\Lambda_u}\,\frac{\E(F_t\,|\,\cC_u)}{\E(F_t)}\right),
$$
and by (\ref{61}) and (\ref{62}) the family of random variables 
$$
\left\{\frac{\E(F_t\,|\,\cC_u)}{\E(F_t)}\,:\, t\geq 0\right\}
$$
is uniformly integrable by Sheffe's lemma (see, e.g., Meyer
\cite{meyer66}), 
and, hence, (\ref{63}) holds in ${\bf L}^1(\Omega).$ To verify the
martingale property of $M$ notice that if $u<v$ then
$\Lambda_u\in\cC_v$ and by  (\ref{63}) we have also 
$$
\lim_{t\to\infty}\frac{\E({\bf 1}_{\Lambda_u}\,F_t)}{\E(F_t)}
=\E({\bf 1}_{\Lambda_u}\,M_v).
$$
Consequently,
$$
\E({\bf 1}_{\Lambda_u}\,M_v) =\E({\bf 1}_{\Lambda_u}\,M_u),
$$
i.e., $M$ is a martingale. Since the family $\{\Q^{(u)}: u\geq 0\}$ of
probability measures is consistent, claim 3) follows from Kolmogorov's
existence theorem (see, e.g., Billingsley \cite{billingsley68} p. 228-230).
\end{proof}

\subsection{Penalization with local time}
\label{sec62}

We are interested in analyzing the penalizations of diffusion $X$ with the weight process 
given by
\begin{equation}
\label{635}
F_t:= h(L_t),\quad  t\geq 0
\end{equation}
with a suitable function $h.$ 
In particular, if $h={\bf 1}_{[0,\ell)}$ for some fixed $\ell>0$ then
  $F_t={\bf 1}_{\{L_t<\ell\}}.$ In the next theorem we prove under some
assumtions on $h$ the validity of the basic penalization hypotheses
(\ref{61}) and (\ref{62}) 
for the weight process $\{F_t: t\geq 0\}.$ The explicit form of the
corresponding martingale $M^h$ is given. In Section 6.3 it is
seen that $M^h$ remains to be a martingale for more general functions
$h,$ and properties of $X$ under the probability measure induced by
$M^h$ are discussed.

In Roynette et al. \cite{rvyV} this kind of penalizations via local times of Bessel processes with dimension
parameter $d\in(0,2)$ are studied. Our work generalizes Theorem 1.5 in
\cite{rvyV} for diffusions 
with subexponential L\'evy measure.

%First we show that 
\begin{theorem}
\label{thm62}
Let $h:[0,\infty)\mapsto [0,\infty)$ be a Borel measurable,
    right-continuous and non-increasing 
function with compact support in $[0,K]$ for some given  $K>0.$ Assume
also that
$$
\int_0^K h(y)\,dy =1,
$$
and define for $x\geq 0$
$$
H(x):=\int_0^x h(y)\,dy. 
$$
Then for any $u\geq 0$ 
%\begin{description}
%\item{1)}
\begin{equation}
\label{64}
\lim_{t\to\infty}\frac{\E_0(h(L_t)\,|\,\cC_u)}{\E_0(h(L_t))}=S(X_u)h(L_u)+1-
  H(L_u)=:M^h_u \qquad \text{a.s.}
%\lim_{t\to\infty}\frac{\E_0(h(L_t)\,|\,\cC_u)}{\E_0(h(L_t))}=\frac{S(X_u)h(L_u)+\int_{L_u}^\infty
%  h(x)dx}
%{\int_{0}^\infty h(x)dx}=:M^h_u \qquad \text{a.s.}
 \end{equation}
and 
%\item{2)}
\begin{equation}
\label{645} 
\E_0\left(M^h_u\right)=1.
\end{equation}
%\end{description}
Consequently, statements 1), 2) and 3) in Theorem \ref{thm61} hold.
\end{theorem}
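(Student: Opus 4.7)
The plan is to proceed in three steps: establish the asymptotic of $\E_0(h(L_t))$, then of $\E_0(h(L_t)\,|\,\cC_u)$ to deduce the a.s.\ limit (\ref{64}), and finally verify the normalization $\E_0(M^h_u)=1$.

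For the first step, writing $\mu:=-dh$ for the positive Radon measure on $[0,K]$, we have $h(\ell)=\mu((\ell,\infty))$ and $\int_0^K y\,\mu(dy)=\int_0^K h(y)\,dy=1$ by integration by parts (using the compact support). Fubini then gives $\E_0(h(L_t))=\int_0^K\P_0(L_t<y)\,\mu(dy)$, and Proposition~\ref{prop31} yields the pointwise asymptotic $\P_0(L_t<y)/\nu((t,\infty))\to y$. Monotonicity in $y$ combined with dominated convergence (eventually $\P_0(L_t<y)/\nu((t,\infty))\leq 2K$ uniformly in $y\in[0,K]$) yields $\E_0(h(L_t))\sim\nu((t,\infty))$.

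For the second step, the Markov property and the additive-functional identity $L_t=L_u+L_{t-u}\circ\theta_u$ give $\E_0(h(L_t)\,|\,\cC_u)=\E_{X_u}(h(L_u+L_{t-u}))$ for $t>u$, with $X_u,L_u$ treated as $\cC_u$-measurable constants. The same Fubini decomposition yields
$$\E_{X_u}(h(L_u+L_{t-u}))=\int_{(L_u,K]}\P_{X_u}(L_{t-u}<y-L_u)\,\mu(dy).$$
Proposition~\ref{prop33} provides the pointwise asymptotic $\P_{X_u}(L_{t-u}<y-L_u)\sim(S(X_u)+y-L_u)\,\nu((t-u,\infty))$, and Lemma~\ref{uni}(i), applied to the subexponential distribution derived from the tail of $\nu$, gives $\nu((t-u,\infty))/\nu((t,\infty))\to 1$. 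Dominating the integrand uniformly in $y\in(L_u,K]$ by (eventually) $2(S(X_u)+K-L_u)$ and integrating by parts inside the resulting integral,
$$\E_{X_u}(h(L_u+L_{t-u}))/\nu((t,\infty))\to S(X_u)h(L_u)+1-H(L_u)=M^h_u\quad\text{a.s.},$$
and dividing by the Step~1 asymptotic gives (\ref{64}).

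For the normalization (\ref{645}), the plan is to apply It\^o calculus. Let $\tilde M_u:=S(X_u)-L_u$, which is a continuous local martingale — this is the Skorokhod-type decomposition for the instantaneously reflecting diffusion in its scale function, and the paper's normalization of $L$ in (\ref{e000}) is precisely tailored so that $\tilde M$ has no drift. Pathwise integration by parts, combined with the observation that the Stieltjes integral $\int_0^u S(X_s)\,dh(L_s)$ vanishes because $L$ (and hence $h(L)$) changes only where $S(X)=0$, and the occupation-time identity $\int_0^u h(L_s)\,dL_s=H(L_u)$, gives
$$M^h_u=1+\int_0^u h(L_s)\,d\tilde M_s,$$
a continuous non-negative local martingale starting at~$1$; Fatou applied to (\ref{64}), or the supermartingale property, already yields $\E_0(M^h_u)\leq 1$. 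The main obstacle is the reverse inequality: one must upgrade this local martingale to a true martingale on $[0,u]$. A natural route is to stop at $T\wedge\tau_n$ with $T:=\inf\{s:L_s\geq K\}$ and $\tau_n:=\inf\{s:X_s\geq n\}$; the stopped process is bounded by $nh(0)+1$, hence a bona fide martingale with expectation~$1$, and one must then justify the passage $n\to\infty$ by uniform integrability. This is the only step requiring diffusion-specific control of $d\langle\tilde M\rangle_s$ (equivalently, moments of $S(X)$ stopped at $T$). Once (\ref{645}) is in hand, the remaining conclusions follow directly from Theorem~\ref{thm61}.
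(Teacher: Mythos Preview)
Your Steps 1--2 are essentially the paper's argument: write $h(\ell)=\mu((\ell,K])$ with $\mu=-dh$, use Fubini to get $\E_a(h(L_t))=\int_{(0,K]}\P_a(L_t<\ell)\,\mu(d\ell)$, apply Proposition~\ref{prop33} pointwise, and pass to the limit by dominated convergence on the compact support. The paper packages this as a single asymptotic $\E_a(h(L_t))\sim(S(a)h(0)+1)\,\nu((t,\infty))$ valid for every $a\ge0$, then invokes the Markov property exactly as you do. Your explicit appeal to Lemma~\ref{uni}(i) for $\nu((t-u,\infty))/\nu((t,\infty))\to1$ is a step the paper leaves implicit.

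Your Step~3 has the gap you yourself flag: the passage from local to true martingale is deferred to ``diffusion-specific control of $d\langle\tilde M\rangle_s$'', which you do not carry out. The paper closes this gap by a different and cleaner route that avoids any quadratic-variation estimate. The key input is that $S(X)$ is not merely a local submartingale but a genuine \emph{submartingale} (citing M\'el\'eard~\cite{meleard86}); its Doob--Meyer decomposition $S(X_t)=\tilde Y_t+\tilde L_t$ therefore yields a \emph{true} martingale $\tilde Y$. (The identification $\tilde L=L$, which you assert in passing as a consequence of the normalization~(\ref{e000}), the paper actually proves via a resolvent computation: equating two expressions for $\lambda\int_0^\infty e^{-\lambda t}\E_0(S(X_t))\,dt$ in terms of the fundamental solutions $\varphi_\lambda,\psi_\lambda$ and using the natural-boundary behavior at $+\infty$.) Once $\tilde Y$ is a true martingale, one writes
\[
M^h_t=h(L_t)\tilde Y_t+\bigl(h(L_t)L_t+1-H(L_t)\bigr);
\]
the bracketed term is uniformly bounded (by $Kh(0)+1$, since $h$ vanishes beyond $K$), and the first term is a bounded process times a martingale closed on $[0,T]$, hence uniformly integrable there. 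Thus $\{M^h_t:0\le t\le T\}$ is u.i.\ and $M^h$ is a true martingale with $\E_0(M^h_u)=1$. The It\^o step is done for $C^1$ $h$ and extended to bounded Borel $h$ by the monotone class theorem. Note that your stopping-at-$\tau_n$ route would, to pass to the limit, also ultimately need integrability of $S(X_u)$ --- so the submartingale property of $S(X)$ is the missing ingredient either way.
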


\begin{proof}
{\sl\ I)} We prove first (\ref{64}). 
\hfill\break\hfill
{\sl a)} To begin with, the following result on
the behavior of the denominator in (\ref{64}) is needed:
%\begin{lemma}
%\label{lem61}
for any $a\geq 0$ 
\begin{equation}
\label{65}
\E_a(h(L_t))\,\mathop{\sim}_{t\to +\infty}\,
\left(S(a)h(0)+1\right)\,\nu((t,\infty)).
% \int_0^\infty h(u)du\right)\,\nu((t,\infty)). 
\end{equation}
%\end{lemma}
%\begin{proof}
To show this, let $\mu$ denote the measure induced by $h,$ i.e., $\mu(dy)=-dh(y).$ Then
\begin{equation}
\label{636}
h(x)=\int_{(x,K]}\mu(dy)=\int_{(0,K]}{\bf 1}_{\{y>x\}}\mu(dy),
\end{equation}
and, consequently, 
$$
\E_a(h(L_t))=\E_a\left( \int_{(0,K]}{\bf 1}_{\{\ell>L_t\}}\mu(d\ell)\right)
= \int_{(0,K]}\P_a(L_t<\ell)\mu(d\ell).
$$
By Proposition \ref{prop33} 
$$
\lim_{t\to\infty}\frac{\P_a(L_t<\ell)}{\nu((t,\infty))}=S(a)+\ell.
$$
Moreover, for $\ell\leq K$ 
$$
\frac{\P_a(L_t<\ell)}{\nu((t,\infty))}\leq
\frac{\P_a(L_t<K)}{\nu((t,\infty))}\to S(a)+K\quad \text{as}\ t\to\infty, 
$$
and, by the dominated convergence theorem,
$$
\lim_{t\to\infty}\int_{(0,K]}\frac{\P_a(L_t<\ell)}{\nu((t,\infty))}\,\mu(d\ell)
=\int_{(0,K]}(S(a)+\ell)\,\mu(d\ell).
$$ 
Hence,
\begin{equation}
\label{66}
\E_a(h(L_t))\,\mathop{\sim}_{t\to +\infty}\,\left(\int_{(0,K]}(S(a)+\ell)\,\mu(d\ell)\right) \,\nu((t,\infty)), 
\end{equation}
and the integral in (\ref{66}) can be evaluated as follows:
\begin{eqnarray*}
&&
\hskip-1cm
\int_{(0,K]}(S(a)+\ell)\,\mu(d\ell)=S(a)\int_{(0,K]}\,\mu(d\ell)+\int_{(0,K]}\ell\,\mu(d\ell)
\\
&&
\hskip3cm
=S(a)h(0)+\int_{(0,K]}\mu(d\ell)\int_0^\ell du
\\
&&
\hskip3cm
=S(a)h(0)+\int_0^K du\int_{(u,K)}\mu(d\ell)
\\
&&
\hskip3cm
=S(a)h(0)+\int_0^K h(u) du
\\
&&
\hskip3cm
=S(a)h(0)+1.%\int_0^\infty h(u) du.
\end{eqnarray*}
This concludes the proof of (\ref{65}). 
\hfill\break\hfill
{\sl b)} To proceed with the proof of (\ref{64}),
recall that $\{L_s\,:\,s\geq 0\}$ is an additive functional, that is, 
$L_t=L_u+L_{t-u}\circ \te_u$ for $t>u$ where $\te_u$ is the usual shift operator. 
Hence, by the Markov property, for $t>u$ 
\begin{equation}
\label{67}
\E_0(h(L_t)\,|\,\cC_u)=\E_0(h(L_u+L_{t-u}\circ \te_u)\,|\,\cC_u)=H(X_u,L_u,t-u)
\end{equation}
with
$$
H(a,\ell,r):=\E_a(h(\ell+L_r)).
$$
By (\ref{65}), since $x\mapsto h(\ell+x)$ is non-increasing with compact support,
$$
H(a,\ell,r)\,\mathop{\sim}_{t\to +\infty}\, \left(S(a)h(\ell)+\int_0^\infty h(\ell+u)du\right)\,\nu((t,\infty)). 
$$
Bringing together (\ref{67}) and (\ref{65})
 with $a=0$ yields
$$
\lim_{t\to\infty}\frac{\E_0(h(L_t)\,|\,\cC_u)}{\E_0(h(L_t))}=\frac{S(X_u)h(L_u)+\int_{L_u}^\infty h(x)dx}{\int_{0}^\infty h(x)dx}
$$
completing the proof of  (\ref{64}).
\\
\noindent
{\sl II)} To verify (\ref{645}), we show that  
$\{M^h_t\,:\, t\geq 0\}$ defined in (\ref{64}) is a
non-negative martingale with $M^h_0=1$ (cf. Theorem \ref{thm61}
statement {\sl 1)}). 
\hfill\break\hfill
{\sl a)} First, consider the process  $S(X)=\{S(X_t)\,:\,t\geq
0\}.$ Since the scale function is increasing $S(X)$ 
is a
non-negative linear diffusion. Moreover, e.g., from  Meleard 
\cite{meleard86}, it is, in fact, a sub-martingale with the
Doob-Meyer decomposition    
\begin{equation}
\label{701}
S(X_t)= \tilde Y_t+\tilde L_t,
\end{equation}
where  $\tilde Y$ is a martingale and $\tilde L$ is a non-decreasing adapted
process. Because $\tilde L$ increases only when $S(X)$ is at 0 or,
equivalently, $X$ is at 0 $\tilde L$ is a local time of $X$ at
0. Consequently, $\tilde L$ is a multiple of $L$ a.s. (for the
normalization of $L,$ see (\ref{e000})), i.e., there is a non-random constant $c$ such that for
all $t\geq 0$
\begin{equation}
\label{7012}
\tilde L_t= c\, L_t.
\end{equation}
We claim that $\tilde L$ coincides with $L,$ that is $c=1.$ 
To show this, recall that %from (\ref{ne000}) that 
$$
\E_x(L^{(y)}_t)=\int_0^t p(s;x,y)\, ds,
$$ 
which yields (cf. (\ref{a0}))
$$
R_ \lambda(0,0)=\int_0^\infty \lambda\,{\rm e}^{-\lambda t}\,
\E_0(L_t)\,dt.
$$
From (\ref{701}) and (\ref{7012}) we have  $\E_0(L_t)={\displaystyle \frac 1c\,\E_0(S(X_t))},$ and, hence,
\begin{eqnarray}
\label{n70}
&&
\nonumber
R_ \lambda(0,0)=\frac 1 c \int_0^\infty \lambda\,{\rm e}^{-\lambda t}\,
\E_0(S(X_t))\,dt 
\\
&&
\hskip1.6cm
= 
\frac 1 c \int_0^\infty S(y)\,\lambda\,
R_ \lambda(0,y)\, m(dy).
\end{eqnarray}
Next recall that the resolvent kernel can be expressed as 
\begin{equation}
\label{f000}
R_ \lambda(x,y)= w^{-1}_\lambda\, \psi_\lambda(x)\,
\varphi_\lambda(y),\quad 0\leq x\leq y,
\end{equation}
where  $w_\lambda$ is a constant (Wronskian) and $\varphi_\lambda$ and
$\psi_\lambda$ are the
fundamental decreasing and increasing, respectively, solutions of the generalized differential
equation
\begin{equation}
\label{f001}
\frac d{dm}\frac d{dS}\, u= \lambda u
\end{equation}
characterized (probabilistically) by 
\begin{equation}
\label{f002}
\E_x\left({\rm e}^{-\lambda H_y}\right)=\frac{R_ \lambda(x,y)}{R_
  \lambda(y,y)}.
\end{equation}
Consequently, (\ref{n70}) is equivalent with 
\begin{eqnarray}
\label{n701}
&&
\nonumber
\varphi_\lambda(0)= 
\frac 1 c \int_0^\infty S(y)\,\lambda\,
\varphi_ \lambda(y)\, m(dy)
\\
&&
\nonumber
\hskip1.2cm
=
\frac 1 c \int_0^\infty S(y)\,\frac d{dm}\frac d{dS}\, \varphi_ \lambda(y)\, m(dy).
\\
\nonumber
&&
\hskip1.2cm
=
\frac 1 c \int_0^\infty dS(y)\,\int_y^\infty m(dz)\, \frac d{dm}\frac d{dS}\, \varphi_ \lambda(z).
\\
&&
\hskip1.2cm
=
\frac 1 c \int_0^\infty dS(y)\lp\frac d{dS}
\varphi_ \lambda(+\infty)-\frac d{dS}\, \varphi_ \lambda(y)\rp,
\end{eqnarray}
where for the third equality we have used Fubini's theorem. Next we claim that 
\begin{equation}
\label{n71}
\frac d{dS}\,\varphi_ \lambda(+\infty):=\lim_{x\to\infty} \frac d{dS}\,\varphi_ \lambda(x)=0.
\end{equation}
To prove this, recall that the Wronskian (a constant) is given for all $z\geq 0$ by
\begin{equation}
\label{n72}
w_\lambda=\varphi_\lambda(z)\,\frac d{dS}\,
\psi_ \lambda(z)+\psi_\lambda(z)\,\lp -\frac d{dS}\,\varphi_ \lambda(z)\rp.
\end{equation}
Notice that both terms on the right hand side are non-negative. Since the boundary point $+\infty$ is
assumed to be natural it holds that $\lim_{z\to\infty} H_z=+\infty$
a.s. and, therefore, (cf. (\ref{f002}))  
$$
\lim_{z\to\infty}\psi_\lambda(z)=+\infty. 
$$ 
Consequently, letting $z\to +\infty$ in (\ref{n72}) we obtain
(\ref{n71}). Now (\ref{n701}) takes the form  
%because $\varphi_\lambda$ is decreasing and $S$ is increasing.  

\begin{eqnarray*}
%\label{n70}
&&
\varphi_\lambda(0) 
=
-\frac 1 c \lp  \varphi_ \lambda(+\infty)-\varphi_ \lambda(0)\rp .
\end{eqnarray*}
This implies that $c=1$ since $\varphi_ \lambda(+\infty)=0$ by the assumption that $+\infty$ is 
natural  (cf. (\ref{f002})).
\hfill\break\hfill
{\sl b)} To proceed with the proof that $M^h$ is a martingale,   
consider first the case with continuously differentiable $h.$ Then,
applying (\ref{701}),
\begin{equation}
\label{71}
dM^h_t=h(L_t)(d\tilde Y_t+dL_t) +S(X_t)h'(L_t)dL_t-h(L_t)dL_t=h(L_t)dY_t,
\end{equation}
where we have used that
$$
S(X_t)h'(L_t)dL_t=S(0)h'(L_t)dL_t=0.
$$
Consequently, $M^h$ is a continuous local martingale, and it 
is a continuous martingale if for any $T>0$ the process 
$\{M^h_t\,:\, 0\leq t\leq T\}$ is
uniformly integrable (u.i.). To prove this, we use again (\ref{701}) and
write 
\begin{equation}
\label{72}
M^h_t=h(L_t)\tilde Y_t+h(L_t)L_t+1-H(L_t).
\end{equation}
Since $h$ is non-increasing and and has a compact support in $[0,K]$
we have 
$$
|h(L_t)L_t+1-H(L_t)|\leq K\,\sup_{x\in[0,K]} h(x)+\int_0^\infty h(u)du
$$
showing that  $\{ h(L_t)L_t+1-H(L_t): t\geq 0\}$ is u.i. Moreover, since  $\{
h(L_t): t\geq 0\}$ is bounded and $\{\tilde Y_t\,:\, 0\leq t\leq T\}$ is u.i. it
follows that $\{h(L_t)\tilde Y_t\,:\, 0\leq t\leq T\}$ is u.i.. Consequently,
$\{M^h_t\,:\, 0\leq t\leq T\}$ is u.i., as claimed, and, hence,
$\{M^h_t: t\geq 0\}$ is a true martingale implying (\ref{645}). % $\E(M^h_t)=1.$ 
By the monotone class theorem (see, e.g., Meyer \cite{meyer66} T20 p. 28) 
we can deduce that $\{M^h_t: t\geq 0\}$ remains a martingale if 
the assumtion 
``$h$ is continuously differentiable'' is relaxed to be ``$h$ is bounded and
Borel-measurable''. The proof of Theorem \ref{thm62} is now complete.
\end{proof}

\begin{example}
\label{ex61} Let $h(x):= {\bf 1}_{[0,\ell)}(x)$ with $\ell>0.$ Then
$$
h(0)=1,\quad \int_x^\infty h(y)dy=(\ell-x)^+,\quad \int_0^\infty
h(y)dy=\ell,
$$
and the martingale $M^h$ takes the form
\begin{eqnarray}
\label{68}
&&
\nonumber
M^h_u=\frac 1\ell\left(S(X_u){\bf 1}_{\{L_u<\ell\}}+ (\ell-L_u)_+\right)
\\
&&
\nonumber
\hskip.8cm
=\frac 1\ell\left(S(X_u)+ \ell-L_u\right){\bf 1}_{\{L_u<\ell\}}
\\
&&
\nonumber
\hskip.8cm
=\frac 1\ell\left(S(X_{u\wedge{\tau_\ell}})+ \ell-L_{u\wedge{\tau_\ell}}\right)
\\
&&
\nonumber
\hskip.8cm
=1+\frac 1\ell\left(S(X_{u\wedge{\tau_\ell}})-L_{u\wedge{\tau_\ell}}\right).
\\
&&
\nonumber
\hskip.8cm
=1+\frac 1\ell\,\tilde Y_{u\wedge{\tau_\ell}}.
\end{eqnarray}
\end{example}

\subsection{The law of $X$ % $\{X_t\}$ 
under the penalized measure}
\label{sec63}

In this section we study the process $X$ under the penalized measure
$\Q$ introduced in Theorem \ref{thm62}. In fact, we consider a more
general situation, and assume that $h$ is ``only'' a Borel measurable and
non-negative function defined on $\R_+$ such that 
\begin{equation}
\label{685}
\int_0^\infty h(x)dx =1.
\end{equation}
For such a function $h$ we define
\begin{equation}
\label{69}
M^h_t:= S(X_t)h(L_t)+1-H(L_t),
\end{equation}
where
%\begin{equation}
%\label{686}
$$
H(x):=\int_0^x h(y)dy.
%\end{equation}
$$
It can be proved (see Roynette et al. \cite{rvyII} Section 3.2 and
\cite{rvyV} Section 3) that $\{M^h_t:t\geq 0\}$ is also in this more
general case a martingale such that $\E_0(M^h_t)=1$ and 
$\lim_{t\to\infty}M^h_t=0.$
Therefore, we may define, for each $u\geq 0,$ a probability measure  
$\Q^h$ on $(\cC,\cC_u)$ by setting 
\begin{equation}
\label{73}
\Q^h(\Lambda_u):=\E_0\left({\bf 1}_{\Lambda_u}\, M^h_u\right)\qquad \Lambda_u\in\cC_u.
\end{equation}
The notation $\E^h$ is used for the expectation with respect to
$\Q^h.$ Next two propositions constitute a generalization of Theorem
1.5 in \cite{rvyV}.

\begin{proposition}
\label{prop61}
Under $\Q^h,$ the random variable $L_\infty:=\lim_{t\to \infty}L_t$ is finite
a.s. and 
$$
\Q^h(L_\infty\in d\ell)=h(\ell)\,d\ell.
$$
\end{proposition}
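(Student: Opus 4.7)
The plan is to determine the $\Q^h$-law of $L_\infty$ by one application of optional sampling to the martingale $M^h$ at the stopping time $u\wedge\tau_\ell$, and then to let $u\to\infty$ so as to identify the tail $\Q^h(L_\infty\le\ell)$. Since the statement only asks for the distribution of $L_\infty$, it will be enough to verify that $\Q^h(L_\infty\le\ell)=H(\ell)$ for every $\ell>0$; finiteness of $L_\infty$ will then follow from $H(\infty)=1$.

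Fix $\ell>0$ and $u\ge 0$. Because $L$ is non-decreasing, $\{L_u<\ell\}$ coincides with $\{u<\tau_\ell\}\in\cC_u$, so the definition of $\Q^h$ in (\ref{73}) gives
\begin{equation*}
\Q^h(L_u<\ell)=\E_0\bigl[\indic_{\{u<\tau_\ell\}}\,M^h_u\bigr].
\end{equation*}
Applying optional sampling to the martingale $M^h$ at the bounded stopping time $u\wedge\tau_\ell$ and splitting according to $\{\tau_\ell\le u\}$ versus $\{\tau_\ell>u\}$, I obtain
\begin{equation*}
1=\E_0\bigl[M^h_{u\wedge\tau_\ell}\bigr]=\E_0\bigl[\indic_{\{\tau_\ell\le u\}}\,M^h_{\tau_\ell}\bigr]+\E_0\bigl[\indic_{\{\tau_\ell>u\}}\,M^h_u\bigr].
\end{equation*}
At time $\tau_\ell$ the canonical process sits at $0$ (because $L$ grows only on $\{X=0\}$) and $L_{\tau_\ell}=\ell$ by continuity of $L$ (valid since $m$ has no atom at $0$); combined with $S(0)=0$ this evaluates $M^h_{\tau_\ell}=1-H(\ell)$. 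Inserting this gives the clean identity
\begin{equation*}
\Q^h(L_u<\ell)=1-\bigl(1-H(\ell)\bigr)\,\P_0(\tau_\ell\le u).
\end{equation*}

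Now I let $u\to\infty$. Recurrence of $X$ under $\P_0$ forces $L_t\uparrow\infty$ $\P_0$-a.s., hence $\tau_\ell<\infty$ $\P_0$-a.s.\ and $\P_0(\tau_\ell\le u)\uparrow 1$. On the other side, the $\cC_u$-events $\{L_u<\ell\}$ are decreasing in $u$ with intersection $\{L_\infty\le\ell\}$, and $\Q^h$ is a bona fide probability measure on the whole $(C,\cC)$ by the coherent extension guaranteed in Theorem \ref{thm61}. Continuity from above therefore gives
\begin{equation*}
\Q^h(L_\infty\le\ell)=\lim_{u\to\infty}\Q^h(L_u<\ell)=H(\ell).
\end{equation*}
Since $H(\ell)\to 1$ as $\ell\to\infty$ by (\ref{685}), this identifies the $\Q^h$-distribution of $L_\infty$ with the probability measure of density $h$, and in particular yields $\Q^h(L_\infty<\infty)=1$.

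The only genuinely delicate point is the pathwise evaluation of $M^h_{\tau_\ell}$, which relies on two standard facts that should be recorded explicitly: continuity of $L$ (giving $L_{\tau_\ell}=\ell$) and $X_{\tau_\ell}=0$ (giving the vanishing of the $S(X_t)h(L_t)$ term via $S(0)=0$). Once these are in hand the argument is essentially pure bookkeeping around the bounded optional-sampling identity for $M^h$.
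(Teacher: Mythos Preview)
Your argument is correct and follows essentially the same route as the paper: optional stopping for $M^h$ at $u\wedge\tau_\ell$, the evaluation $M^h_{\tau_\ell}=1-H(\ell)$ from $X_{\tau_\ell}=0$ and $L_{\tau_\ell}=\ell$, and then $u\to\infty$ using $\P_0(\tau_\ell<\infty)=1$. The only cosmetic difference is that the paper works with $\Q^h(L_u\ge\ell)=\E_0[{\bf 1}_{\{\tau_\ell\le u\}}M^h_{\tau_\ell}]$ directly, while you obtain the complementary quantity; one small slip worth noting is that $\{L_u<\ell\}$ and $\{u<\tau_\ell\}$ need not literally coincide (the latter may include $\{L_u=\ell,\,u<\tau_\ell\}$), but since your optional-sampling identity and limit are really statements about $\{\tau_\ell>u\}$, the argument is unaffected.
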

\begin{proof}
For $u\geq 0$ and $\ell\geq 0$ it holds $\{L_u\geq \ell\}\in\cC_u,$
and, consequently, 
$$
\Q^h(L_u\geq \ell)=\E_0\left({\bf 1}_{\{L_u\geq \ell\}}\, M^h_u\right)
=\E_0\left({\bf 1}_{\{\tau_\ell\leq u\}}\, M^h_u\right).
$$
By optional stopping,
$$
\E_0\left({\bf 1}_{\{\tau_\ell\leq u\}}\, M^h_u\right)
=\E_0\left({\bf 1}_{\{\tau_\ell\leq u\}}\, M^h_{\tau_\ell}\right), 
$$
but
\begin{eqnarray}
&&
\label{735}
\nonumber
 M^h_{\tau_\ell}
=  S(X_{\tau_\ell})h(L_{\tau_\ell})+1-H(L_{\tau_\ell})
%\\
%&&
%\hskip.8cm
%\nonumber
= S(0)h(\ell)+1-H(\ell)
\\
&&
\hskip.8cm
=\int_\ell^\infty h(y)\,dy.
\end{eqnarray}
As a result,
$$
\Q^h(L_u\geq \ell)=\left(\int_\ell^\infty h(y)\,dy\right)\,\P_0(\tau_\ell\leq u).
$$
Letting here $u\to \infty$ and using the fact that $\tau_\ell$ is
finite $\P_0-$a.s. shows that
$$
\Q^h(L_\infty\geq \ell)=\int_\ell^\infty h(y)\,dy.
$$
Moreover, from assumption (\ref{685}) it now follows that $L_\infty$ is
$\Q^h$-a.s. finite, and the proof is complete.
\end{proof}

In the proof of the next proposition we use the process $X^\uparrow=\{ X^\uparrow_t:t\geq 0\}$
which is obtained from $\widehat X$ (cf. (\ref{kill})) by conditioning $\widehat X$ not to
hit 0. The process  $X^\uparrow$ can be described as Doob's
$h$-transform of $\widehat X,$ see, e.g., Salminen, Vallois and Yor
\cite{SVY07} p.105. The probability measure and the expectation
operator associated with  $X^\uparrow$ are denoted by $\P^\uparrow$
and $\E^\uparrow,$ respectively. The transition density and the speed measure associated with $X^\uparrow$ 
are given by
\begin{equation}
\label{f01}
p^\uparrow(t;x,y):= \frac{\hat p(t;x,y)}{ S(y)S(x)},\quad\,m^\uparrow(dy):=S(y)^2\,m(dy).
\end{equation}
Notice (cf. (\ref{f00})) that
\begin{equation}
\label{f02}
p^\uparrow(t;0,y):= \lim_{x\downarrow 0}
p^\uparrow(t;x,y)%=\lim_{x\downarrow 0} {\hat p(t;x,y)}{ S(y)S(x)}
= 
\frac{f_{y0}(t)}{S(y)}.
\end{equation}
Consequently, we have the formula 
\begin{equation}
\label{f03}
1=\P_0^\uparrow\lp X^\uparrow_t> 0\rp =\int_0^\infty
p^\uparrow(t;0,y)\,m^\uparrow(dy)
=\int_0^\infty f_{y0}(t)\, S(y) \,m(dy).
\end{equation}

%Moreover,
%$$
%\dot\nu(t)= p^\uparrow(v;0,0):=\lim_{y\downarrow 0}p^\uparrow(t;0,y).
%$$

\begin{proposition}
\label{prop62}
Let $\lambda$ denote the last exit time from 0, i.e., 
$$
\lambda:=\sup\{t\,:\, X_t=0\}
$$
with $\lambda=0$ if $\{\cdot\}=\emptyset.$ Then 
\begin{description}
\item{{\bf 1)}} $\Q^h(0<\lambda<\infty)=1,$
\item{{\bf 2)}} under $\Q^h$
\begin{description}
\item{{\bf a)}} \hskip.2cm $\{X_t\,:\,t\leq \lambda\}$ and $\{X_{\lambda+t}\,:\,t\geq 0\}$
  are independent,
\item{{\bf b)}} \hskip.1cm conditionally on $L_\infty=\ell,$ the process 
$\{X_t\,:\,t\leq \lambda\}$ is distributed as $\{X_t\,:\,t\leq \tau_\ell\}$
  under $\P_0,$ in other words, 
\begin{eqnarray}
\label{74}
&&
\nonumber
\hskip-2cm
\E^h\left(F(X_t\,:\, t\leq \lambda)\,f(L_\infty)\right)
\\
&&
%\hskip1cm
=
\int_0^\infty f(\ell)h(\ell)\E_0\left(F(X_t\,:\, t\leq
\tau_\ell\right)\, d\ell.
\end{eqnarray}
where $F$ is a bounded and measurable
  functional 
defined in the canonical space $(C,\cC,(\cC_t))$ and
$f: [0,\infty)\mapsto [0,\infty)$ is a bounded and measurable function. 
\item{{\bf c)}} \hskip.2cm the process 
$\{X_{\lambda+t}\,:\,t\geq 0\}$ is distributed as 
$\{X^\uparrow_t\,:\,t\geq 0 \}$ started from 0. 
%(For the definition of 
%$X^\uparrow,$ see Section 2, e.g., (\ref{f01})).   
\end{description}
\end{description}
\end{proposition}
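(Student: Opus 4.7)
The starting point is the following optional-stopping calculation that drives all three parts. For $\ell>0$ set $\bar H(\ell):=\int_\ell^\infty h(y)\,dy$. For bounded $\cC_{\tau_\ell}$-measurable $Z$, $u>0$ and bounded $\cC_u$-measurable $\Psi$, the martingale property of $M^h$ gives
\[
\E^h[Z\,\Psi\circ\theta_{\tau_\ell}\,\mathbf{1}_{L_\infty>\ell}]=\E_0[Z\,\Psi\circ\theta_{\tau_\ell}\,M^h_{\tau_\ell+u}].
\]
Since $X_{\tau_\ell}=0$ and $L_{\tau_\ell+u}=\ell+L_u\circ\theta_{\tau_\ell}$, the strong Markov property of $\P_0$ at $\tau_\ell$ combined with the algebraic identity
\[
S(X_u)h(\ell+L_u)+1-H(\ell+L_u)=\bar H(\ell)\,M^{h_\ell}_u,\qquad h_\ell(y):=h(\ell+y)/\bar H(\ell),
\]
reduces this to
\[
\E^h[Z\,\Psi\circ\theta_{\tau_\ell}\,\mathbf{1}_{L_\infty>\ell}]=\bar H(\ell)\,\E_0[Z]\,\E^{h_\ell}_0[\Psi].
\]
Taking $\Psi\equiv 1$ recovers $\Q^h(L_\infty>\ell)=\bar H(\ell)$; the full identity says that under $\Q^h(\,\cdot\,|\,L_\infty>\ell)$ the pre- and post-$\tau_\ell$ trajectories are independent, distributed respectively as under $\P_0$ and, from state $0$, as under $\Q^{h_\ell}_0$.

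\textbf{Part 1} follows from Proposition \ref{prop61} and the decomposition above: $L_\infty<\infty$ $\Q^h$-a.s., and since conditionally on $\{L_\infty>\ell\}$ the pre-$\tau_\ell$ piece is distributed as under $\P_0$, $\tau_\ell$ is finite on $\{L_\infty>\ell\}$ $\Q^h$-a.s.; letting $\ell\uparrow L_\infty$ and summing the (finitely accumulating) excursion lengths shows $\lambda<\infty$, while $\lambda>0$ is immediate from instantaneous reflection at $0$. \textbf{Parts 2a) and 2b)} then follow from the decomposition by disintegrating over $L_\infty\in d\ell$ (density $h(\ell)$ by Proposition \ref{prop61}) and sending $\tau_\ell\uparrow\lambda$ by continuity of $L$: the right-hand side of (\ref{74}) is precisely what the pre-$\tau_\ell$ piece contributes under $\P_0$.

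\textbf{Part 2c)} requires identifying the post-$\lambda$ trajectory as $X^\uparrow$ started from $0$, independently of $L_\infty$. Intuitively, on $\{L_\infty=\ell\}$ the post-$\tau_\ell$ piece starts at $0$ and accrues no further local time --- the ``non-returning excursion'' --- so its law ought not depend on $h$. For rigour, compute the joint one-dimensional marginal: for $t,y>0$,
\[
\Q^h(X_{\lambda+t}\in dy,\,L_\infty\in d\ell)=h(\ell)\,p^\uparrow(t;0,y)\,m^\uparrow(dy)\,d\ell,
\]
using (\ref{f00}), (\ref{v00}) and (\ref{f02}), either as the $\epsilon\downarrow 0$ limit of $\Q^{h_\ell}_0\bigl(X_t\in dy,\,L_t\in(0,\epsilon)\bigr)/\epsilon$ or --- more conceptually --- via the compensation formula of It\^o excursion theory: under $\Q^h$ the unique infinite excursion carries the $S$-$h$-transform of the It\^o excursion measure, which coincides with $\P_0^\uparrow$ (cf.\ \cite{SVY07}). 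The full finite-dimensional marginals extend by the Markov property inherited from $\Q^{h_\ell}_0$, and this identification of the post-$\lambda$ law --- distinguishing it from arbitrary Markovian limits --- is the main obstacle in the proof.
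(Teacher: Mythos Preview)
Your key identity---the optional-stopping/strong-Markov decomposition at $\tau_\ell$ giving
\[
\E^h\bigl[Z\,(\Psi\circ\theta_{\tau_\ell})\,\mathbf{1}_{\{L_\infty>\ell\}}\bigr]=\bar H(\ell)\,\E_0[Z]\,\E^{h_\ell}_0[\Psi]
\]
for $Z\in\cC_{\tau_\ell}$---is correct and elegant. But the passage from this to the statement has genuine gaps.

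\textbf{The disintegration step is not a continuity argument.} Your formula conditions on $\{L_\infty>\ell\}$ with $Z=F(X_t:t\le\tau_\ell)$ depending on $\ell$ through $\tau_\ell$. To reach (\ref{74}) you must condition on $\{L_\infty\in d\ell\}$ \emph{and} replace $\tau_\ell$ by $\lambda=\tau_{L_\infty-}$ simultaneously. This is a derivative of a product in $\ell$, not a limit: both the event and the functional move. ``Sending $\tau_\ell\uparrow\lambda$ by continuity of $L$'' does not do this; you would need something like a Riemann-sum partition in $\ell$ together with a continuity hypothesis on $F$, and then a monotone-class argument to remove that hypothesis. None of this is in the proposal.

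\textbf{The argument for $\lambda<\infty$ is circular.} Knowing $\tau_\ell<\infty$ for each fixed $\ell<L_\infty$ (which your formula gives) does not imply $\sup_{\ell<L_\infty}\tau_\ell<\infty$; ``finitely accumulating excursion lengths'' is exactly the assertion $\lambda<\infty$ to be proved. Under $\P_0$ the $\tau_\ell$ typically have infinite mean, so there is no soft bound available.

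\textbf{Part 2c.} Your first option---an $\epsilon\downarrow 0$ limit of $\Q^{h_\ell}_0(X_t\in dy,\,L_t\in(0,\epsilon))/\epsilon$---does not identify the post-$\lambda$ law: conditioning the \emph{post}-$\tau_\ell$ process on small future local time is not the same as looking after the last zero. Your second option, the compensation formula, is precisely the paper's method.

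\textbf{How the paper proceeds.} The paper avoids all three issues by computing the \emph{joint} expectation
\[
\Delta=\E^h\bigl[F_1(X_u:u\le\lambda)\,F_2(X_{\lambda+v}:v\le T)\,f(L_\lambda)\,\mathbf{1}_{\{0<\lambda<\infty\}}\bigr]
\]
in one stroke: approximate $\lambda$ by $\lambda_N:=\sup\{u\le N:X_u=0\}$, transfer to $\P_0$ via $M^h_N$, observe that the $(1-H(L_N))$ part of $M^h_N$ vanishes, rewrite the $S(X_N)h(L_N)$ part as a sum over jumps $\tau_{\ell-}<N<\tau_\ell$, and apply the excursion Master formula. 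The excursion integral $\int_{\cE}F_2(e_v:v\le T)\,\mathbf{1}_{\{T\le T'\le\zeta(e)\}}\,S(e_{T'})\,{\bf n}(de)$ is then shown to equal $\E_0^\uparrow[F_2(X_v:v\le T)]$ independently of $T'$, giving the fully factored form
\[
\Delta=\E_0^\uparrow\bigl[F_2(X_v:v\le T)\bigr]\int_0^\infty h(\ell)f(\ell)\,\E_0\bigl[F_1(X_u:u\le\tau_\ell)\bigr]\,d\ell,
\]
from which all of 1), 2a), 2b), 2c) follow by specialising $F_1,F_2,f$. In particular $F_1=F_2=f\equiv 1$ gives $\Q^h(0<\lambda<\infty)=1$ directly, with no limiting argument in $\ell$.
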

\begin{proof} 
Consider for a given  $T>0$ 
\begin{eqnarray*}
&&
\Delta%(F_1,F_2,f)
%:=\E^h\left(F_1(X_u\,:\, u\leq
%\lambda)\,F_2(X_{\lambda+v}\,:\, v\leq T)\,f(L_\infty)\,{\bf 1}_{\{L_\infty<\infty\}} \right) 
%\\
%&&
%\hskip.5cm
 :=
\E^h\left(F_1(X_u\,:\, u\leq
\lambda)\,F_2(X_{\lambda+v}\,:\, v\leq T)\,f(L_\lambda)\,{\bf 1}_{\{0<\lambda<\infty\}} \right),
\end{eqnarray*}
where $F_1$ and $F_2$ are bounded and measurable
  functionals 
defined in the canonical space $(C,\cC,(\cC_t))$ 
and
$f: [0,\infty)\mapsto [0,\infty)$ is a bounded and measurable function. 
For $N>0$ define
$$
\lambda_N:=\sup\{u\leq N\,:\,X_u=0\}
$$
and
$$
\Delta^{(1)}_N%(F_1,F_2,f)
:=\E^h\left(F_1(X_u\,:\, u\leq
\lambda_N)\,F_2(X_{\lambda_N+v}\,:\, v\leq T)\,f(L_{\lambda_N})
\,{\bf 1}_{\{\lambda_N+T<N\}} \right). 
$$
Then 
$$
\Delta=\lim_{N\to\infty}\Delta^{(1)}_N.
$$
By absolute continuity, cf. (\ref{73}),
\begin{eqnarray*}
&&
\hskip-.5cm
\Delta^{(1)}_N%(F_1,F_2,f)
=\E_0\left(F_1(X_u\,:\, u\leq
\lambda_N)\,F_2(X_{\lambda_N+v}\,:\, v\leq T)\,f(L_{\lambda_N})\,{\bf
  1}_{\{\lambda_N+T<N\}}\, M^h_N\rp
\\
&&
\hskip.4cm
=
\E_0\Big(F_1(X_u\,:\, u\leq
\lambda_N)\,F_2(X_{\lambda_N+v}\,:\, v\leq T)\,f(L_{\lambda_N})\,{\bf 1}_{\{\lambda_N+T<N\}}
\\
&&
\hskip5cm
\times
\lp S(X_N)h(L_N)+1-H(L_N)\rp\Big).
\end{eqnarray*}
Since $F_1, F_2,$ and $f$ are bounded and 
$$
\lim_{N\to\infty}\lp 1-H(L_N)\rp =0\quad \P_0{\text -a.s.}
$$
we have 
\begin{eqnarray*}
&&
\hskip-.5cm
\Delta=
\lim_{N\to\infty}
\E_0\Big(F_1(X_u\,:\, u\leq
\lambda_N)\,F_2(X_{\lambda_N+v}\,:\, v\leq T)\,f(L_{\lambda_N})
\\
&&
\hskip4cm
\times{\bf  1}_{\{\lambda_N+T<N\}}
 \,S(X_N)h(L_N)\Big).
\end{eqnarray*}
Let $\Delta^{(2)}_N$ denote the expression after the limit sign. Then
we write
\begin{eqnarray*}
&&
\hskip-.5cm
\Delta^{(2)}_N=
\E_0\Big( \sum_{\ell} F_1(X_u\,:\, u\leq
\tau_{\ell-})
\,F_2(X_{\tau_{\ell-}+v}\,:\, v\leq T)
\\
&&
\hskip4cm
\times f(\ell)\,
\,{\bf  1}_{\{\tau_{\ell-}+T< N<\tau_{\ell}\}}
\,S(X_N)h(\ell)\Big),
\end{eqnarray*}
where $\{\tau_\ell\}$
is the right continuous inverse of $\{L_t\}$  (see (\ref{e00})). 
By the Master formula (see Revuz and Yor \cite{revuzyor01} p. 475 and 483)
\begin{eqnarray*}
&&
\hskip-1.5cm
\Delta^{(2)}_N=
\int_0^\infty d\ell\, h(\ell)f(\ell)\E_0\Big( F_1(X_u\,:\, u\leq
\tau_{\ell})
\\
&&
\hskip.4cm
\times\,\int_{\cE}{\bf n}(de)\,
\,F_2(e_v\,:\, v\leq T)\,{\bf  1}_{\{T\leq N-\tau_{\ell}\leq \zeta(e)\}}\,S(e_{N-\tau_\ell})\Big),
\end{eqnarray*}
where $\cE$ denotes the excursion space, $e$ is a generic excursion,
$\zeta(e)$ is the life time of the excursion $e,$ and ${\bf n}$ is the
It\^o measure in the excursion space (see, e.g.,  \cite{revuzyor01}
p. 480 and \cite{SVY07}). We claim that 
\begin{eqnarray}
\label{745}
&&
\nonumber
\hskip-2cm
I:=\int_{\cE}
\,F_2(e_v\,:\, v\leq T)\,{\bf  1}_{\{T\leq T'\leq \zeta(e)\}}\,S(e_{T'})\,{\bf n}(de)
\\
&&
\hskip4cm
=\E_0^\uparrow\lp F_2(X_v\,:\, v\leq T)\rp.
\end{eqnarray}
Notice that the right hand side of (\ref{745}) does not depend on
$T'.$ We prove (\ref{745}) for $F_2$ of the form 
$$
F_2(e_v\,:\, v\leq T)=G(e_{t_1},\dots, e_{t_k}), \quad
t_1<t_2<\dots<t_k=T,
$$
where $G$ is a bounded and measurable function. For simplicity, take
$k=2$ and use Theorem 2 in \cite{SVY07} to obtain (for notation and
results needed, see
(\ref{kill2}), (\ref{f00}), (\ref{f01}) and (\ref{f02}))
\begin{eqnarray*}
&&
\hskip-1cm
I=\int_{[0,\infty)^3}f_{x_1,0}(t_1)\,\hat p(t_2-t_1;x_1,x_2)\,\hat
  p(T'-t_2;x_2,x_3)
\\
&&
\hskip3cm
\times\,
G(x_1,x_2)\, S(x_3)\,m(dx_1)\,m(dx_2)\,m(dx_3)
\\
&&
\hskip-.5cm
=\int_{[0,\infty)^3}S(x_1)\,f_{x_1,0}(t_1)\, p^\uparrow(t_2-t_1;x_1,x_2)\,\hat
  p^\uparrow(T'-t_2;x_2,x_3)
\\
&&
\hskip2cm
\times
\,G(x_1,x_2)\, S(x_2)^2\,S(x_3)^2\,m(dx_1)\,m(dx_2)\,m(dx_3)
\\
&&
\hskip-.5cm
=\E_0^\uparrow\lp G(X_{t_1},X_{t_2})\rp
\end{eqnarray*}
proving (\ref{745}). Consequently, we have (for all $N$)
\begin{eqnarray*}
&&
\hskip-1.5cm
\Delta^{(2)}_N=\E_0^\uparrow\lp F_2(X_v\,:\, v\leq T)\rp
\int_0^\infty d\ell\, h(\ell)f(\ell)\E_0\lp F_1(X_u\,:\, u\leq
\tau_{\ell})\rp,% \quad \text{for all}\, N,
\end{eqnarray*}
and choosing here $F_1,$ $F_2,$ and $f$ appropriately implies
all the claims of Proposition. In particular, $F_1=F_2=1$ and
$f=1$  yields $\Q^h_0(0<\lambda<\infty)=1,$ and, hence,
$L_\infty=L_\lambda$ $\Q^h_0$-a.s.  
\end{proof}

\section{Appendix: a technical lemma}
\label{20}

The following lemma could be viewed as a ``weak'' form of the Tauberian
theorem (cf. Feller \cite{feller71} Theorem 1 p. 443) stating, roughly
speaking, that if
two functions  behave similarly at zero then their Laplace transforms  
behave similarly at infinity.

\begin{lemma}
\label{lemma31}
Let $\mu$ be a $\sigma$-finite measure on $[0,+\infty)$ and $g_1$ and $g_2$ two real valued functions such that
for some $\la_0>0$
$$
C_i:=\int_{[0,+\infty)}{\rm e}^{-\la_0 \gamma}\,|g_i(\gamma)|\,\mu(d\gamma)<\infty,\quad i=1,2.
$$
Assume also that $g_2(\gamma)>0$ for all $\gamma.$ Introduce for $\lambda\geq \la_0$
$$
f_i(\lambda):=\int_{[0,+\infty)}{\rm e}^{-\la \gamma}\,g_i(\gamma)\,\mu(d\gamma),\quad i=1,2.
$$
and suppose
\begin{equation}
\label{c1}
\lim_{\la\to +\infty}f_2(\la)\,{\rm e}^{b\la}=+\infty\qquad {\rm for\ all\ } b>0. 
\end{equation}
Then 
\begin{equation}
\label{c2}
 {\disp{ g_1(\gamma)\,\mathop{\sim}_{\gamma\to 0}\, g_2(\gamma)}}
\end{equation}
implies
\begin{equation}
\label{c3}
f_1(\la)
%\,\sim\, 
\,\mathop{\sim}_{\la\to +\infty}\, 
f_2(\la)
%\quad {\rm as\ } \la\to +\infty. 
\end{equation}
\end{lemma}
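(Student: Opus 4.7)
My plan is to reduce both Laplace-type integrals to the region where $\gamma$ is small (so the pointwise asymptotic $g_1 \sim g_2$ is useful), and to show that the remaining tail part is negligible relative to $f_2(\lambda)$ thanks to the growth condition (\ref{c1}).

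Fix $\varepsilon \in (0,1)$. By hypothesis (\ref{c2}) and the positivity of $g_2$, there exists $\delta = \delta(\varepsilon) \in (0,1)$ such that
\[
(1-\varepsilon)\,g_2(\gamma) \;\leq\; g_1(\gamma) \;\leq\; (1+\varepsilon)\,g_2(\gamma), \qquad \gamma \in (0,\delta].
\]
I will split each integral as $f_i(\lambda) = A_i(\lambda) + B_i(\lambda)$ with
\[
A_i(\lambda) := \int_{[0,\delta]} {\rm e}^{-\lambda\gamma} g_i(\gamma)\,\mu(d\gamma), \qquad
B_i(\lambda) := \int_{(\delta,\infty)} {\rm e}^{-\lambda\gamma} g_i(\gamma)\,\mu(d\gamma).
\]

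For the tail, writing ${\rm e}^{-\lambda\gamma} = {\rm e}^{-(\lambda-\lambda_0)\gamma}\,{\rm e}^{-\lambda_0\gamma}$ and using the finiteness of $C_i$, for every $\lambda \geq \lambda_0$,
\[
|B_i(\lambda)| \;\leq\; {\rm e}^{-(\lambda-\lambda_0)\delta}\int_{(\delta,\infty)} {\rm e}^{-\lambda_0\gamma}|g_i(\gamma)|\,\mu(d\gamma) \;\leq\; C_i\,{\rm e}^{\lambda_0\delta}\,{\rm e}^{-\lambda\delta}.
\]
For the main part, by the choice of $\delta$ and since $g_2 > 0$,
\[
|A_1(\lambda) - A_2(\lambda)| \;\leq\; \varepsilon \int_{[0,\delta]}{\rm e}^{-\lambda\gamma}\,g_2(\gamma)\,\mu(d\gamma) \;\leq\; \varepsilon\, f_2(\lambda),
\]
the last inequality because $g_2>0$ and $B_2(\lambda) \geq -|B_2(\lambda)|$ is dominated in the trivial direction; more carefully, $A_2(\lambda) \leq f_2(\lambda) + |B_2(\lambda)|$, and $|B_2(\lambda)|/f_2(\lambda) \to 0$ by (\ref{c1}).

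Combining the two estimates,
\[
\left|\frac{f_1(\lambda)}{f_2(\lambda)} - 1\right| \;\leq\; \varepsilon\,\frac{A_2(\lambda)}{f_2(\lambda)} + \frac{|B_1(\lambda)| + |B_2(\lambda)|}{f_2(\lambda)} \;\leq\; \varepsilon\bigl(1+o(1)\bigr) + \frac{(C_1+C_2)\,{\rm e}^{\lambda_0\delta}}{f_2(\lambda)\,{\rm e}^{\lambda\delta}}.
\]
By assumption (\ref{c1}) applied with $b=\delta$, the second term tends to 0 as $\lambda \to \infty$, so $\limsup_{\lambda\to\infty}|f_1(\lambda)/f_2(\lambda) - 1| \leq \varepsilon$. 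Letting $\varepsilon \downarrow 0$ gives (\ref{c3}).

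The only genuinely delicate point is ensuring that the tail $B_i(\lambda)$ is indeed negligible compared to $f_2(\lambda)$; this is precisely where the growth hypothesis (\ref{c1}) enters. Everything else is a standard splitting argument. No further structure on $\mu$ is needed beyond $\sigma$-finiteness and the integrability encoded in $C_1, C_2$.
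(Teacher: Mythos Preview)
Your proof is correct and follows essentially the same route as the paper's: split each $f_i$ at a small threshold, use the pointwise asymptotic $g_1\sim g_2$ on the inner piece, and kill the tail via the integrability constant $C_i$ together with the growth condition~(\ref{c1}). The only cosmetic differences are that the paper treats $\limsup$ and $\liminf$ separately while you bound $|f_1/f_2 - 1|$ directly, and the paper writes the tail estimate as $C_i\,{\rm e}^{-\lambda\varepsilon/2}$ (for $\lambda\geq 2\lambda_0$) rather than your $C_i\,{\rm e}^{\lambda_0\delta}{\rm e}^{-\lambda\delta}$. One small remark on exposition: since $g_2>0$ you have $B_2(\lambda)\geq 0$ and hence $A_2(\lambda)\leq f_2(\lambda)$ outright, so the ``more carefully'' detour through $A_2\leq f_2+|B_2|$ and the $(1+o(1))$ factor are unnecessary.
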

\begin{proof} By property (\ref{c2})
there exist two functions $\te_*$ and $\te^*$ 
such that for some $\ep>0$  and for all $\gamma\in(0,\ep)$ 
\begin{equation}
\label{c4}
\te_*(\ep)\,g_2(\gamma)\leq g_1(\gamma)\leq \te^*(\ep)\,g_2(\gamma).
\end{equation}
and 
\begin{equation}
\label{c41}
\lim_{\ep\to 0}\te_*(\ep)=\lim_{\ep\to 0}\te^*(\ep)=1.
\end{equation}
We assume also that  $\te_*(\ep)>0$ and $\te^*(\ep)>0.$ 
Letting $\la\geq 2\la_0$ we have for
$\ga\geq \ep$
$$
\la\ga\geq \la_0\ga+\frac{\la\ga} 2\geq \la_0\ga+\frac{\la\ep} 2
$$
and  
\begin{equation}
\label{c5}
\int_\ep^\infty {\rm e}^{-\la \gamma}\,|g_i(\gamma)|\,\mu(d\gamma)
\leq {\rm e}^{-\la\ep/2} \,\int_\ep^\infty\, {\rm e}^{-\la_0
  \gamma}\,|g_i(\gamma)|\,\mu(d\gamma)\leq 
 {\rm e}^{-\la\ep/2}\,C_i.
\end{equation}
Furthermore, from (\ref{c4}) 
\begin{eqnarray}
\label{c6}
&&
\nonumber
\hskip-1.7cm
\int_0^\ep {\rm e}^{-\la \gamma}\,g_1(\gamma)\,\mu(d\gamma)
\leq
\te^*(\ep)\,\int_0^\ep {\rm e}^{-\la \gamma}\,g_2(\gamma)\,\mu(d\gamma)
\\
&&
\nonumber
\hskip2cm
\leq
\te^*(\ep)\,\int_0^\infty {\rm e}^{-\la \gamma}\,g_2(\gamma)\,\mu(d\gamma)
\\
&&
\hskip2cm
\leq 
\te^*(\ep)\,f_2(\la)
\end{eqnarray}
since $g_2$ is assumed to be positive. Writing
$$
f_1(\la)=\int_0^\ep {\rm e}^{-\la \gamma}\,g_1(\gamma)\,\mu(d\gamma)+\int_\ep^\infty {\rm e}^{-\la \gamma}\,g_1(\gamma)\,\mu(d\gamma)
$$
the estimates in (\ref{c5}) and (\ref{c6}) yield 
$$
f_1(\la)\leq \te^*(\ep)\,f_2(\la)+ {\rm e}^{-\la\ep/2}\,C_1,
$$
which after dividing with  $f_2(\la)>0$ implies using (\ref{c1}) and (\ref{c41})
\begin{equation}
\label{c7}
\limsup_{\la\to+\infty}\frac{f_1(\la)}{f_2(\la)}=1.
\end{equation}
For a lower bound, consider
\begin{eqnarray*}
%\label{c8}
&&
%\nonumber
\hskip-1.7cm
f_1(\la)=\int_{[0,\infty)} {\rm e}^{-\la \gamma}\,g_1(\gamma)\,\mu(d\gamma)
\\
&&
\hskip-.6cm
\geq 
\int_{[0,\ep)} {\rm e}^{-\la \gamma}\,g_1(\gamma)\,\mu(d\gamma)
-\int_\ep^\infty {\rm e}^{-\la \gamma}\,|g_1(\gamma)|\,\mu(d\gamma)
\\
&&
\hskip-.6cm
\geq 
\te_*(\ep)\,\int_{[0,\ep)} {\rm e}^{-\la \gamma}\,g_2(\gamma)\,\mu(d\gamma)
- {\rm e}^{-\la\ep/2}\,C_1
\\
&&
\hskip-.6cm
\geq 
\te_*(\ep)\left( f_2(\te)-\int_\ep^\infty {\rm e}^{-\la \gamma}\,g_2(\gamma)\,\mu(d\gamma)\right)
- {\rm e}^{-\la\ep/2}\,C_1.
\\
&&
\hskip-.6cm
\geq 
\te_*(\ep)\,f_2(\te)- \te_*(\ep)\,{\rm e}^{-\la\ep/2}\,C_2- {\rm e}^{-\la\ep/2}\,C_1.
\end{eqnarray*}
Hence,
$$
\frac{f_1(\la)}{f_2(\la)}\geq \te_*(\ep)-\left(\te_*(\ep)C_2-C_1\right)
\frac 1{{\rm  e}^{\la\ep/2}\,f_2(\la)}
$$
showing that 
$$
\liminf_{\la\to+\infty}\frac{f_1(\la)}{f_2(\la)}\geq 1,
$$
and completing the proof.
\end{proof}

%{\bf Acknowledgement.} 
%h

\bibliographystyle{plain}
%\bibliography{rkrange}
\bibliography{yor1}
\end{document}